\begin{document}

\newtheorem{theorem}{Theorem}[section]
\newtheorem{tha}{Theorem}
\newtheorem{conjecture}[theorem]{Conjecture}
\newtheorem{corollary}[theorem]{Corollary}
\newtheorem{lemma}[theorem]{Lemma}
\newtheorem{claim}[theorem]{Claim}
\newtheorem{proposition}[theorem]{Proposition}
\newtheorem{construction}[theorem]{Construction}
\newtheorem{definition}[theorem]{Definition}
\newtheorem{question}[theorem]{Question}
\newtheorem{problem}[theorem]{Problem}
\newtheorem{remark}[theorem]{Remark}
\newtheorem{observation}[theorem]{Observation}

\newcommand{\ex}{{\mathrm{ex}}}
\newcommand{\Ker}{{\mathrm{Ker}}}

\newcommand{\EX}{{\mathrm{EX}}}

\newcommand{\AR}{{\mathrm{AR}}}

\def\endproofbox{\hskip 1.3em\hfill\rule{6pt}{6pt}}
\newenvironment{proof}%
{%
\noindent{\it Proof.}
}%
{%
 \quad\hfill\endproofbox\vspace*{2ex}
}
\def\qed{\hskip 1.3em\hfill\rule{6pt}{6pt}}
\def\ce#1{\lceil #1 \rceil}
\def\fl#1{\lfloor #1 \rfloor}
\def\lr{\longrightarrow}
\def\e{\varepsilon}
\def\cA{{\cal A}}
\def\cB{{\cal B}}
\def\cC{{\cal C}}
    \def\cD{{\cal D}}
\def\cE{{\cal E}}
\def\cF{{\cal F}}
\def\cG{{\cal G}}
\def\cH{{\cal H}}
\def\ck{{\cal K}}
\def\cI{{\cal I}}
\def\cJ{{\cal J}}
\def\cL{{\cal L}}
\def\cM{{\cal M}}
\def\cP{{\cal P}}
\def\cQ{{\cal Q}}
\def\cS{{\cal S}}
\def\cT{{\cal T}}
\def\cW{{\cal W}}

\def\bkl{{\cal B}^{(k)}_\ell}
\def\cmkt{{\cal M}^{(k)}_{t+1}}
\def\cpkl{{\cal P}^{(k)}_\ell}
\def\cckl{{\cal C}^{(k)}_\ell}

\def\pkl{\mathbb{P}^{(k)}_\ell}
\def\ckl{\mathbb{C}^{(k)}_\ell}
\def\cthreel{\mathbb{C}^{(3)}_\ell}
\def\codd{\mathbb{C}^{(k)}_{2t+1}}
\def\ceven{\mathbb{C}^{(k)}_{2t+2}}

\def\imp{\Longrightarrow}
\def\1e{\frac{1}{\e}\log \frac{1}{\e}}
\def\ne{n^{\e}}
\def\rad{ {\rm \, rad}}
\def\equ{\Longleftrightarrow}
\def\pkl{\mathbb{P}^{(k)}_\ell}

\def\c2ell{\mathbb{C}^{(2)}_\ell}
\def\codd{\mathbb{C}^{(k)}_{2t+1}}
\def\ceven{\mathbb{C}^{(k)}_{2t+2}}
\def\podd{\mathbb{P}^{(k)}_{2t+1}}
\def\peven{\mathbb{P}^{(k)}_{2t+2}}
\def\TT{{\mathbb T}}
\def\bbT{{\mathbb T}}
\def\cE{{\mathcal E}}

\def\wt{\widetilde}
\def\wh{\widehat}
\voffset=-0.5in

\setstretch{1.1}
\pagestyle{myheadings}
\markright{{\small \sc F\"uredi and Jiang:}
  {\it\small Tur\'an numbers of $k$-uniform linear cycles}}

\title{\huge\bf Hypergraph Tur\'an numbers of linear cycles}

\author{
Zolt\'an F\"uredi\thanks{R\'enyi Institute of Mathematics, Budapest, Hungary.
E-mail: z-furedi@illinois.edu
\newline
Research supported in part by the Hungarian National Science Foundation OTKA,
 and by a European Research Council Advanced Investigators Grant 267195.
}\quad\quad
Tao Jiang\thanks{Dept. of Mathematics, Miami University, Oxford,
OH 45056, USA. E-mail: jiangt@miamioh.edu.
  %
 \newline\indent
{\it 2010 Mathematics Subject Classifications:}
05D05, 05C65, 05C35.\newline\indent
{\it Key Words}:  Tur\'an number, path, cycles, extremal hypergraphs, delta systems.
} }

\date{Feb 10, 2013}

\maketitle
\begin{abstract}
A {\it $k$-uniform linear cycle} of length $\ell$, denoted by
$\ckl$, is a cyclic list of $k$-sets $A_1,\ldots, A_{\ell}$ such that
consecutive sets intersect in exactly one element and nonconsecutive
sets are disjoint.
For all $k\geq 5$ and $\ell\geq 3$ and sufficiently large $n$ we
detemine the largest size of a $k$-uniform set family on $[n]$ not containing a linear
cycle of length $\ell$. For odd $\ell=2t+1$ the unique extremal family $\cF_S$ consists of
all $k$-sets in $[n]$ intersecting a fixed $t$-set $S$ in $[n]$. For even $\ell=2t+2$,
the unique extremal family consists of $\cF_S$ plus all the $k$-sets outside $S$
containing some fixed two elements.  For $k\geq 4$ and large $n$ we also establish
an exact result for so-called {\it minimal cycles}.
For all $k\geq 4$ our results substantially  extend Erd\H{o}s' result on largest
 $k$-uniform families without $t+1$ pairwise disjoint members and confirm,
 in a stronger form, a conjecture of  Mubayi and Verstra\"ete~\cite{MV1}.
Our main method is the delta system method.
\end{abstract}

\section{Introduction}\label{s:1}

The delta system method is a very useful tool for set system problems.
It was fully
  developed in a series of papers including ~\cite{furedi-1983} and ~\cite{frankl-furedi-exact}.
It was successfully used for starlike configurations in ~\cite{frankl-furedi-exact} and~\cite{lale-furedi}
and recently also for larger configurations (as paths and trees) in~\cite{furedi-2012} and ~\cite{FJS}.
In this paper we apply the delta system method,
 particularly tools from ~\cite{furedi-1983} and~\cite{frankl-furedi-exact},  to determine,
  for all $k\geq 5$ and large $n$, the Tur\'an numbers of certain hypergraphs called
 {\it $k$-uniform linear cycles}.
This confirms, in a stronger form, a conjecture of Mubayi and Verstra\"ete \cite{MV1} for $k\geq 5$ and
 adds to the limited list of hypergraphs whose Tur\'an numbers
 have been known either exactly or asymptotically.

We organize the paper as follows. Section~\ref{s:2} and ~\ref{s:3} contain definitions
concerning hypergraphs.
Section~\ref{s:4} gives a rough upper bound establishing the correct order of the magnitude.
Section~\ref{s:6} contains the statements of the main results.
Section~\ref{s:7} introduces the delta system method and lemmas needed for the linear cycle
 problem and Sections~\ref{s:8}--\ref{s:10} contain proofs.

\section{Definitions: shadows, degrees, delta systems}\label{s:2}
A {\it hypergraph} $\cF=(V,\cE)$ consists of a set $V$ of vertices and a set $\cE$ of edges,
where each edge is a subset of $V$.
If $V$ has $n$ vertices, then it is often convenient to just assume that
 $V=[n]=\{1,2,\ldots, n\}$.
Let $\binom{V}{k}$ denote the collection of all the $k$-subsets of $V$.
If all the edges of $\cF$ are $k$-subsets of $V$,
 then we write $\cF\subseteq \binom{V}{k}$ and say that $\cF$ is a {\it $k$-uniform} hypergraph,
 or a {\it $k$-graph} for brevity, on $V$.
Note that the usual graphs are precisely $2$-graphs on respective vertex sets.
A hypergraph $\cF=(V,\cE)$ is also often times called a {\it set system} or {\it set family}
on $V$ with its edges referred to as the {\it members} of the set system/family.
A $k$-graph $\cF$ is {\it $k$-partite} if its vertex set $V$ can be partitioned into $k$
subsets $V_1,\ldots, V_k$ such that each edge of $\cF$ contains precisely one vertex from
each $V_i$.

The {\it shadow} of $\cF$, denoted by $\partial(\cF)$, is defined as
$$\partial(\cF)=\{D:  \exists F\in \cF, D\subsetneq F\}.$$
Here, we treat $\emptyset$ as a member of $\partial(\cF)$.
We define the {\it $p$-shadow} of $\cF$ to be
$$\partial_p(\cF)=\{D: D\in \partial(\cF), |D|=p\}.$$
The Lov\'asz'~\cite{L79} version of the Kruskal-Katona theorem states that if
 $\cF$ is a $k$-graph of size $|\cF|=\binom{x}{k}$ where $x\geq k-1$ is a real number,
 then for $k\geq p\geq 1$
\begin{equation}\label{eq:KK1.1}
  |\partial_p(\cF)|\geq \binom{x}{p}.
  \end{equation}

Let $\cF$ be a hypergraph on $[n]$ and $D\subseteq V(\cF)$.
The {\it degree } $\deg_\cF(D)$ of $D$ in $\cF$,  is defined as
$$\deg_\cF(D)=|\{F: F\in \cF, D\subseteq F\}|.$$

A family of sets $F_1,\ldots, F_s$ is said to form an {\it $s$-star} or {\it $\Delta$-system}
of size $s$ with {\it kernel} $D$ if $F_i\cap F_j=D$ for all $1\leq i<j\leq s$ and
$\forall i\in [s], F_i\setminus D\neq \emptyset$.
The sets $F_1,\ldots, F_s$ are called the {\it petals} of this $s$-star.
Note that we allow $D=\emptyset$.
Let $\cF$ be a hypergraph and $D\subseteq V(\cF)$.
The kernel degree $\deg^*_\cF(D)$ of $D$ in $\cF$ is defined  as
$$\deg^*_\cF(D)=\max \{s: \cF \mbox{ contains an $s$-star with kernel } D\}.$$

Given a $k$-graph $\cF$ on a set $V$ and a positive integer $s$, the {\it kernel graph} of $\cF$
 with threshold $s$, denoted by $\Ker_s(\cF)$, is defined as
$$\Ker_s(\cF)=\{D\subseteq V: \deg^*_\cF(D)\geq s\}.$$
For convenience, if $D\in \Ker_s(\cF)$, we will just say that $D$ {\it is a kernel}.
For each $1\leq r\leq k-1$, the $r$-kernel graph of $\cF$ with threshold $s$,
denoted by $\Ker^{(r)}_s(\cF)$, is defined as
$$\Ker^{(r)}_s(\cF)=\{D\subseteq V: |D|=r, \deg^*_\cF(D)\geq s\}.$$
If $D\in \Ker^{(r)}_s(\cF)$ we will just say that $D$ is an {\it  $r$-kernel}.
Throughout the paper, we will frequently use the following fact which follows easily
from the definition of $\deg^*_\cF(D)$.
\begin{equation}\label{eq:1.2}
 \text{Given sets $D,Y$, if $\deg^*_\cF(D) > |Y|$ then } \exists F\in \cF
  \text{ such that }D\subseteq F \text{ and }(F\setminus D)\cap Y=\emptyset.
  \end{equation}


\section{Matchings, intersecting hypergraphs, paths and cycles} \label{s:3}

Given $n,k,t$, let $\cF_S$  be the $k$-graph on $[n]$ formed by
 taking a $t$-set $S$ in $[n]$ and taking as edges all the $k$-sets in $[n]$ that intersect $S$.
Clearly, $\cF_S$  contains no $t+1$ pairwise disjoint members, i.e., its matching number is $t$.
Erd\H{o}s~\cite{erdos-matching} showed that
there is a smallest positive integer $n_0(k,t)$ such that, for all $n> n_0(k,t)$, $\cF_S$
 is the largest  $k$-uniform set system on $[n]$ not containing $t+1$
pairwise disjoint members.
The function $n_0(k,t)$ has not been completely determined.
The value of $n_0(k,2)$ was determined in the classical Erd\H{o}s-Ko-Rado Theorem~\cite{EKR}
 about intersecting families.
For $k=2$ (graphs) the value of $n_0(2,t)$  was determined by Erd\H{o}s and Gallai~\cite{EG65}.
The case $k=3$ was recently investigated by Frankl, R\"odl, and Rucin\'ski~\cite{FRR} and
 $n_0(3,t)$ was finally determined by {\L}uczak and Mieczkowska~\cite{LM} for large $t$,
 and by Frankl~\cite{F12} for all $t$.
In general, Huang, Loh, and Sudakov~\cite{HLS} showed $n_0(k,t)< 3tk^2$,
 which was slightly improved in~\cite{FLM} and greatly improved to $n_0(k,t)\leq (2t+1)k-t$
 by Frankl~\cite{frankl-matching}.

Frankl~\cite{frankl2} showed that for every $n,k,t$ if a $k$-graph $\cF$ on $[n]$ has no
  $t+1$ pairwise  disjoint edges then $|\cF|\leq t\binom{n-1}{k-1}$.
This implies
\begin{equation}\label{eq:2.3}
  \forall D\subseteq [n]  \text{ if } \deg^*_\cF(D) \leq s, \text{ then }\deg_\cF(D)\leq s\binom{n-|D|-1}{k-|D|-1}.
  \end{equation}

Frankl~\cite{frankl1} considered set systems that do not contain
two members intersecting in exactly one element.
This condition is equivalent to
forbidding a linear path of length $2$ (see definition below).
He showed that for all $k\geq 4$ there exists a bound $m(k)$ such that
\begin{equation}\label{eq:EKR}
\text{ if } \cH\subseteq\binom{[m]}{k}  \text { satisfies }\forall A,B\in \cH,
|A\cap B|\neq 1 \text{ and }m> m(k), \text{ then } |\cH|\leq \binom{m-2}{k-2}.
  \end{equation}
The unique extremal family is obtained by taking as members
all the $k$-sets in $[m]$ containing a fixed set of two elements.

We now introduce some notions of hypergraph paths and cycles.
While the notion of a hypergraph matching is a straightforward extension of that
 of a graph matching, there are different possibilities for paths and cycles.
We discuss three versions, Berge path, minimal path and linear (or loose) path.
A {\it Berge path} of length $\ell$ in the hypergraph $\cF$ is a list of distinct hyperedges
  $F_1,\ldots, F_\ell\in \cF$ and $\ell+1$ distinct vertices $P:=\{ v_1,\ldots, v_{\ell+1}\}$
 such that for each $1\leq i\leq \ell$, $F_i$ contains $v_i$ and $v_{i+1}$.

If we allow  only consecutive $F_i$'s to intersect, i.e.,  $F_i\cap F_j=\emptyset$ when $|i-j|\geq 2$,
 then the resulting Berge path is called a {\it minimal} path.
We denote the family of all $k$-uniform minimal paths of length $\ell$ by $\cpkl$.
If we require all the $F_i$'s to be pairwise disjoint
 outside $P$ and $F_i\cap P=\{ v_i, v_{i+1} \}$, then the path is unique.
We call it the $k$-uniform {\it linear path} of length $\ell$ and denote it by $\pkl$.
Note that $\pkl$ is a member of $\cpkl$.

Likewise, a $k$-uniform {\it Berge cycle} of length $\ell$ is a cyclic list of distinct $k$-sets
$F_1,\ldots, F_\ell$ and $\ell$ distinct vertices $C=\{ v_1,\ldots, v_\ell\}$ such that
for each $1\leq i\leq \ell$, $F_i$ contains $v_i$ and $v_{i+1}$ (where $v_{\ell+1}=v_1$).
If we allow only consecutive $F_i$'s in the cyclic list to intersect
 then the resulting cycle is called a {\it minimal} cycle.
We denote the family of all $k$-uniform minimal cycles of length $\ell$ by $\cckl$.
If we require all the $F_i$'s to be pairwise disjoint outside $C$
 and $F_i\cap C=\{ v_i, v_{i+1}\}$, then
the cycle is unique and we call it the $k$-uniform {\it linear cycle} of length $\ell$ and
denote it by $\ckl$. Note that $\ckl$ is a member of $\cckl$.

The {\it triangulated cycle} $\TT_\ell^{(3)}$, is a triple system on $2\ell$ vertices
  $\{ v_1, \dots, v_\ell, u_1, \dots, u_\ell\}$ with $2\ell-2$ edges
 $A_i:=\{ v_1,v_i, v_{i+1}\}$ ($1< i <\ell$) and $B_j:=\{ v_j, v_{j+1}, u_j\}$
 ($\ell\geq 3$). Note that the $B_j$'s form $\mathbb{C}^{(3)}_\ell$.


\section{Hypergraph extensions and an estimate of the Tur\'an number}\label{s:4}

Given a hypergraph $\cH$ whose edges have size at most $k$,
the {\it $k$-expansion} of $\cH$, denoted by $\cH^{(k)}$,
 is the $k$-graph obtained by enlarging each edge of $\cH$ into a $k$-set
 by using new vertices (called {\it expansion vertices})  such that different edges
 are enlarged using disjoint sets of expansion vertices.
For instance, if $\cH=\{1, 12, 123\}$, then $\{1ab, 12c,  123\}$ is the $3$-expansion of
 $\cH$ and $\{1abc, 12de, 123f\}$ is the $4$-expansion of $\cH$.
Note  that for any $k,b$ where $b\geq 2$ and $k\geq b+1$,
 the $k$-expansion of a $b$-uniform linear (or minimal) $\ell$-cycle is a
 $k$-uniform linear (or minimal) $\ell$-cycle.

\begin{proposition}   \label{expansion}
Let $k$ be a positive integer.
Let $\cH:=\{ E_1, \dots, E_t\}$ be a hypergraph whose edges are sets of size at most $k$.
Let $\cF$ be a $k$-graph, $s=tk$.
If $\cH\subseteq \Ker_{s}(\cF)$, then $\cH^{(k)}\subseteq \cF$.
\end{proposition}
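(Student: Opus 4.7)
The plan is a greedy construction: process the edges $E_1,\ldots,E_t$ of $\cH$ one at a time and, at step $i$, extend $E_i$ to a $k$-edge $F_i\in\cF$ using its high kernel degree, taking care that the new vertices $F_i\setminus E_i$ collide neither with vertices of the other $E_j$'s nor with the expansion vertices chosen earlier. The engine of the extension is fact (\ref{eq:1.2}): so long as $\deg^*_\cF(E_i)$ is strictly larger than the size of the forbidden vertex set, we can always find such an $F_i$.

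Concretely, set $V_\cH:=\bigcup_{j=1}^t E_j$, and after $F_1,\ldots,F_{i-1}$ have been chosen, define the forbidden set
$$Y_i \;:=\; (V_\cH\setminus E_i)\,\cup\,\bigcup_{j<i}\bigl(F_j\setminus E_j\bigr).$$
This is exactly the set of vertices that $F_i\setminus E_i$ must avoid if the sequence $F_1,\ldots,F_t$ is to realize a copy of $\cH^{(k)}$. Since $E_i\in\Ker_s(\cF)$, fact (\ref{eq:1.2}) produces the desired $F_i$ as soon as $|Y_i|<s=tk$. The key estimate, using $|V_\cH|\le\sum_j|E_j|$ and $|F_j\setminus E_j|=k-|E_j|$, is
$$|Y_i| \;\le\; \Bigl(\sum_j|E_j|-|E_i|\Bigr)+\sum_{j<i}\bigl(k-|E_j|\bigr) \;=\; \sum_{j>i}|E_j|+(i-1)k \;\le\; (t-i)k+(i-1)k \;=\; (t-1)k \;<\; s,$$
so the greedy step never gets stuck.

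Once all $F_i$'s have been chosen, the petals $F_i\setminus E_i$ are, by construction, pairwise disjoint and disjoint from $V_\cH$; hence $F_i\cap F_j=E_i\cap E_j$ for all $i\neq j$, and the collection $\{F_1,\ldots,F_t\}$ is a copy of $\cH^{(k)}$ inside $\cF$. There is no substantive obstacle: the only nontrivial point is the counting above, and the hypothesis $s=tk$ is chosen precisely so that $(t-1)k<s$ even in the worst case where $V_\cH$ is as large as it can be.
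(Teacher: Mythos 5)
Your proof is correct and follows essentially the same greedy strategy as the paper: extend the $E_i$'s one at a time via fact (\ref{eq:1.2}), forbidding collisions with previously used vertices. The only (harmless) difference is bookkeeping: the paper takes $Y=(\bigcup_{j<i}F_j)\cup(\bigcup_j E_j)$, whose bound $|Y|<s$ implicitly uses that members of $\Ker_s(\cF)$ have size at most $k-1$, whereas your leaner $Y_i=(V_\cH\setminus E_i)\cup\bigcup_{j<i}(F_j\setminus E_j)$ gives $|Y_i|\le(t-1)k<s$ outright without appealing to that fact.
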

\begin{proof}
We want to expand the edges of $\cH$  into edges $F_1, \dots, F_t$ of $\cF$
 such that different edges of  $\cH$ are enlarged through disjoint sets of expansion vertices.
We find $F_i$'s one by one by using \eqref{eq:1.2}.
Suppose $F_1,\ldots, F_{i-1}$ have been defined.
Let $Y=(\cup_{j<i} F_j) \cup (\cup_{j} E_j)$. Since $\deg^*_\cF(E_i)\geq s>|Y|$,
by  \eqref{eq:1.2} one can find an  $F_i\in \cF$
 such that $F_i\supseteq E_i$ and  $F_i\cap Y = E_i$. We do this  for $i=1,\ldots, t$.
The $F_i$'s form a copy of $\cH^{(k)}$.
\end{proof}

\begin{proposition}\label{forest-upper_new3}
Suppose that $\cF$ is a triple system not containing $\cthreel$.
Then $|\cF|\leq (2\ell-3)\binom{n}{2}$.

  \end{proposition}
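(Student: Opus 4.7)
The plan is a peeling argument on low-codegree pairs. Iteratively, while some pair $\{x,y\}\subseteq[n]$ has codegree at most $2\ell-3$ in the current sub-family, remove all (at most $2\ell-3$) triples containing $\{x,y\}$. This drops the codegree of $\{x,y\}$ to zero, so each of the $\binom{n}{2}$ pairs triggers a removal at most once, and in total at most $(2\ell-3)\binom{n}{2}$ triples are removed. Let $\cF^*$ denote the residual family; by construction every pair in $\partial_2(\cF^*)$ has codegree at least $2\ell-2$ in $\cF^*$. The proof reduces to showing $\cF^*=\emptyset$ whenever $\cF$ is $\cthreel$-free.

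Suppose for contradiction that $\cF^*\neq\emptyset$; the goal is to embed $\cthreel$ into $\cF^*$. A key observation is that every vertex $v$ appearing in some edge of $\partial_2(\cF^*)$ has shadow-degree at least $2\ell-1$: if $\{v,w\}\in\partial_2(\cF^*)$, the codegree hypothesis yields $\geq 2\ell-2$ distinct co-vertices of $\{v,w\}$, each of which is a shadow-neighbor of $v$ (adding to $w$ itself). Starting from any triple $T_1=\{v_0,u_1,v_1\}\in\cF^*$, I would greedily build a linear path $T_1,T_2,\ldots,T_{\ell-1}$ by selecting at stage $i$ an unused shadow-neighbor $v_i$ of $v_{i-1}$, then an unused co-vertex $u_i$ of the pair $\{v_{i-1},v_i\}$, setting $T_i=\{v_{i-1},u_i,v_i\}$. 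At each stage $i\leq\ell-1$ at most $2i-1\leq 2\ell-3$ vertices are previously used, so the shadow-degree bound $\geq 2\ell-1$ leaves $\geq 2\ell-2i+1$ candidate choices for $v_i$ and the codegree bound $\geq 2\ell-2$ leaves $\geq 2\ell-2i$ candidate choices for $u_i$, both positive for $i\leq\ell-1$.

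The main obstacle is the closure at stage $\ell$: one needs a final triple $T_\ell=\{v_{\ell-1},u_\ell,v_0\}\in\cF^*$, which requires the pair $\{v_{\ell-1},v_0\}$ to lie in $\partial_2(\cF^*)$, and then its codegree $\geq 2\ell-2$ supplies an internal vertex $u_\ell$ avoiding the $2\ell-3$ already-used vertices of the path. Guaranteeing that the greedily built path ends at a shadow-neighbor of its starting point is the technical heart of the argument. The plan is to exploit the abundance of choices at each greedy step---each stage offering at least $2\ell-2i+1\geq 3$ candidates for the next $v_i$---using a P\'osa-type rotation or by averaging over the many admissible paths, to produce a closable path $v_0,v_1,\ldots,v_{\ell-1}$. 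Once closed, the resulting copy of $\cthreel\subseteq\cF^*\subseteq\cF$ contradicts the $\cthreel$-freeness of $\cF$, and the proposition follows.
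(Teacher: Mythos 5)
You correctly identify the peeling step (remove triples on any pair of codegree at most $2\ell-3$, charging at most $2\ell-3$ removals to each of the $\binom{n}{2}$ pairs, leaving a residual family $\cF^*$ in which every pair of $\partial_2(\cF^*)$ has codegree at least $2\ell-2$). This matches the paper. But you also correctly flag, and then do not close, the real difficulty: your greedy construction of a linear path $T_1,\ldots,T_{\ell-1}$ need not end at a vertex $v_{\ell-1}$ for which $\{v_{\ell-1},v_0\}\in\partial_2(\cF^*)$, so the last triple $T_\ell$ cannot be produced. The appeal to ``P\'osa-type rotation or averaging'' is not an argument; a rotation lemma with codegree threshold $2\ell-2$ (a constant, independent of $n$) is not something you can invoke off the shelf, and there is no obvious reason it would succeed. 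As it stands, this is a gap in the heart of the proof.

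The paper avoids the closure problem altogether by embedding not $\cthreel$ but the triangulated cycle $\TT_\ell^{(3)}$, which contains $\cthreel$. The point is that $\TT_\ell^{(3)}$ is tree-like (a ``3-forest''): its $2\ell-2$ edges can be ordered so that each new edge meets the union of the previous ones in exactly one pair. Consequently the greedy embedding only ever needs to extend through a pair that is already in the shadow of the partially built copy, and codegree $\geq 2\ell-2$ always supplies a fresh third vertex among the at most $2\ell$ vertices used. No cycle ever needs to be ``closed'': the closing edge of the underlying linear cycle, $B_\ell=\{v_\ell,v_1,u_\ell\}$, is reached through the pair $\{v_1,v_\ell\}$, which is already forced into the shadow by the fan edge $A_{\ell-1}=\{v_1,v_{\ell-1},v_\ell\}$. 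That is precisely the structural idea your proof is missing, and it is why the constant in the proposition is $2\ell-3$ rather than something larger.
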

\begin{proof} Starting with $\cF$,
whenever we can find a pair $\{x,y\}$ such that the number of triples
containing the pair is at least one and at most $2\ell-3$,
we remove all triples containing the pair from the system.
Repeat this process until no more triple can be removed. Let $\cH$ be
the remaining triple system. If $\cH\neq\emptyset$, then we must have
$\deg_\cH(\{x,y\})\geq 2\ell-2$ for all $\{x,y\}\in \partial_2(\cH)$. Clearly
$|\cF\setminus \cH|\leq (2\ell-3)\binom{n}{2}$.
We claim that $\cH=\emptyset$.
Otherwise, starting with any triple $A_1=\{v_1,v_2,v_3 \}\in \cH$ we can embed one by one
  $A_1, \dots, A_{\ell-2}$ then $B_1, \dots, B_\ell$, the edges of a triangulated
 cycle $\TT_\ell^{(3)}$ in $\cH$ in the same way as we did in the proof of Proposition~\ref{expansion}.
But $\TT_\ell^{(3)}$ contains $\mathbb{C}_\ell^{(3)}$. This contradicts $\mathbb{C}_\ell^{(3)}
\not\subseteq \cF$.
\end{proof}

Given a family $\cH$ of $k$-graphs, the {\it Tur\'an number} of $\cH$, for fixed $n$,
denoted by $\ex_k(n,\cH)$, is the maximum number edges in a $k$-graph on $[n]$ that does
not contain any member of $\cH$ as a subgraph.
If $\cH$ consists of a single $k$-graph $H$, we will write $\ex_k(n,H)$ for $\ex_k(n,\{H\})$.

\begin{corollary}\label{cycle-bound}
For all $n$ and $k, \ell\geq 3$ we have
$$\ex_k(n,\ckl)\leq (k\ell -1)\binom{n}{k-1}.$$
  \end{corollary}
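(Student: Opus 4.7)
The plan is to mirror the removal-plus-embedding strategy used in the proof of Proposition~\ref{forest-upper_new3}, now with degrees of $(k-1)$-sets as the governing statistic.

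First I would clean up $\cF$: while there is a $(k-1)$-set $D$ with $1\leq\deg(D)\leq k\ell-1$ in the current hypergraph, remove every edge containing $D$. Each $(k-1)$-set is processed at most once (its degree drops to $0$ afterwards), so at most $(k\ell-1)\binom{n}{k-1}$ edges are removed. Let $\cH$ denote the remaining $k$-graph; it is a subsystem of $\cF$, hence still contains no $\ckl$. For every $(k-1)$-set $D$, either $\deg_\cH(D)=0$ or $\deg_\cH(D)\geq k\ell$, and since the petals $F\setminus D$ over edges $F\supseteq D$ are singletons and hence pairwise disjoint, $\deg^\ast_\cH(D)=\deg_\cH(D)$. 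A chaining argument then shows that every non-isolated $s$-set $S$ with $s\leq k-1$ also satisfies $\deg_\cH(S)\geq k\ell$, since such an $S$ extends inside any edge containing it to a non-isolated $(k-1)$-set.

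It now suffices to show $\cH=\emptyset$, from which $|\cF|\leq(k\ell-1)\binom{n}{k-1}$ follows. Suppose $\cH\ne\emptyset$ and pick $F_1\in\cH$. I would build a copy of $\ckl$ in $\cH$ edge by edge in the spirit of Proposition~\ref{expansion}. Choose distinguished vertices $v_1,v_2\in F_1$; at step $i\geq 2$, select an auxiliary edge of $\cH$ through the current attachment vertex $v_i$ whose other $k-1$ vertices lie outside $U_{i-1}=\bigcup_{j<i}F_j$, take $D_i$ to be the $(k-1)$-set consisting of $v_i$ together with $k-2$ of those fresh vertices, and invoke \eqref{eq:1.2} with $Y=U_{i-1}\setminus\{v_i\}$. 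Since $\deg^\ast_\cH(D_i)\geq k\ell$ and $|Y|\leq(\ell-1)(k-1)<k\ell$, this produces $F_i\supseteq D_i$ with $F_i\cap U_{i-1}=\{v_i\}$, and the vertex returned becomes the next cycle vertex $v_{i+1}$. The closing step---where $F_\ell$ must contain both $v_\ell$ and $v_1$---is handled by embedding the triangulated structure $\TT^{(k)}_\ell$ (the $k$-expansion of $\TT_\ell^{(3)}$) in place of $\ckl$, exactly as in Proposition~\ref{forest-upper_new3}; its triangulator edges through the common vertex $v_1$ guarantee that each pair $\{v_1,v_j\}$ lies in $\partial_2(\cH)$, so a valid kernel $D_\ell$ containing both $v_\ell$ and $v_1$ can be assembled at the final step. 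Since $\ckl\subseteq\TT^{(k)}_\ell$, the desired cycle is obtained, contradicting $\cH\subseteq\cF$.

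The main obstacle is the existence of the auxiliary edge of $\cH$ through $v_i$ whose other $k-1$ vertices lie outside $U_{i-1}$: the propagated degree condition ensures many edges through $v_i$, but a priori these could concentrate on $U_{i-1}\setminus\{v_i\}$. The resolution uses the propagated high degrees $\geq k\ell$ for every non-isolated pair $\{v_i,u\}$ together with the fact that the embedded auxiliary structure has only $O(k\ell)$ vertices, so at each step the edges hitting $U_{i-1}\setminus\{v_i\}$ account for a controllable portion of the edges through $v_i$ and a clean extension exists. Verifying the arithmetic inequality $|Y|<k\ell$ throughout the embedding---which follows from the linear vertex count of the auxiliary structure and the cleanup threshold---completes the proof.
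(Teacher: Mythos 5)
Your proof takes a genuinely different route from the paper's. The paper never attempts a direct embedding into the cleaned-up $k$-graph; instead it passes to the $3$-kernel graph $\Ker^{(3)}_s(\cF)$ with $s=k\ell$, observes via Proposition~\ref{expansion} that this triple system is $\cthreel$-free, bounds its size by Proposition~\ref{forest-upper_new3}, and then finishes with a double-counting over $3$-sets using \eqref{eq:2.3} to control the degrees of the non-kernel triples. Your plan instead performs a cleanup on $(k-1)$-sets and then tries to embed a copy of $\ckl$ (via $\TT^{(k)}_\ell$) directly in the surviving hypergraph $\cH$. Unfortunately there are two genuine gaps.

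First, the ``chaining argument'' you invoke only propagates the \emph{plain} degree $\deg_\cH(S)\ge k\ell$ to smaller non-isolated sets $S$; it says nothing about the ability to find an edge through $v_i$ that escapes $U_{i-1}$. The $k\ell$ edges certified by $\deg_\cH(\{v_i,u\})\ge k\ell$ could all pass through a fixed subset of $U_{i-1}$, and the ``controllable portion'' reasoning does not rule this out. What is actually required is a vertex-replacement argument: starting from any edge through $v_i$, repeatedly swap one bad vertex for a fresh one via a non-isolated $(k-1)$-set. That is a different argument than the one you sketch, and it is the one that has to be written down.

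Second, and more seriously, even the correct replacement argument cannot be made to run with the threshold $s=k\ell$ you are forced to use. At some stage of embedding $\TT^{(k)}_\ell$ the obstacle set $Y$ must be avoided has size up to $|V(\TT^{(k)}_\ell)| = 2\ell+(k-3)(2\ell-2)$, and one needs (roughly) $k\ell+k-1>|V(\TT^{(k)}_\ell)|$, i.e.\ $4\ell+3k-7>k\ell$. This fails for many pairs, e.g.\ $k=5,\ \ell\ge 8$, or $k\ge 10,\ \ell\ge 4$. So the claimed inequality ``$|Y|<k\ell$ throughout the embedding'' is simply false in general, and raising the threshold would raise the number of removed edges past the target $(k\ell-1)\binom{n}{k-1}$. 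The paper's argument is immune to this: in Proposition~\ref{expansion} the kernels being expanded are $3$-sets, so each step contributes only $k-3$ fresh vertices and the relevant count $|Y|\le 2\ell+(\ell-1)(k-3)$ stays below $k\ell$ for all $k,\ell\ge 3$. This is exactly why the paper routes the argument through the $3$-kernel graph rather than through $(k-1)$-kernels.
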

\begin{proof}
Consider an $\cF\subseteq \binom{[n]}{k}$ avoiding $\ckl$.
Let $s=k\ell$.
By Proposition~\ref{expansion} the triple-system $\Ker_s^{(3)}(\cF)$ does not contain
 $\cthreel$ so we can apply Proposition~\ref{forest-upper_new3} for its size.
Use the upper bound \eqref{eq:2.3} for the degrees of the other triples of $[n]$.
We obtain
\begin{equation*}
  |\cF|\binom{k}{3}=\sum_{|T|=3,\, T\subseteq [n]} \deg_\cF(T)
      \leq |\Ker_s^{(3)}(\cF)|\binom{n-3}{k-3} + \binom{n}{3}(s-1)\binom{n-4}{k-4}.
       \end{equation*}
An easy calculation completes the proof.  \end{proof}


\section{Some previous results and a conjecture}\label{s:5}

For the class of $k$-uniform Berge paths of length $\ell$,
 Gy\H{o}ri et al.~\cite{gyori} determined $\ex_k(n, \bkl)$ exactly for
 infinitely many $n$.
For the Tur\'an problem for  $k$-uniform minimal paths of length $\ell$,
 observe that to forbid such a path it suffices to forbid a matching of size $t+1$, $\cmkt$,
  where $t=\fl{(\ell-1)/2}$.
So $\ex_k(n,\cpkl)\geq \ex_k(n, \cmkt)\geq \binom{n}{k}-\binom{n-t}{k}$, where the last lower
 bound is attained by taking all the $k$-sets in $[n]$ intersecting some fixed $t$-set $S$.
Mubayi and Verstraete~\cite{MV1} showed that this lower bound is tight up to a factor of $2$. Note
that $\binom{n}{k}-\binom{n-t}{k}=t{n-1\choose k-1}+O(n^{k-2})$.
They proved that if $k,\ell\geq 3$, $t=\fl{(\ell-1)/2}$ and $n\geq (\ell+1)k/2$, then
 $\ex_k(n,\cP^{(k)}_3)={n-1\choose k-1}$ and for $\ell,k>3$
\begin{equation} \label{mv}
t{n-1\choose k-1}+O(n^{k-2})\leq \ex_k(n,\cP^{(k)}_\ell)\leq 2t{n-1\choose k-1}+O(n^{k-2}).
  \end{equation}
Using the delta system method, F\"uredi, Jiang, and Seiver~\cite{FJS} were able to sharpen \eqref{mv}
 to determine the exact value of $\ex_k(n,\cpkl)$ for all $k\geq 3, t\geq 1$ and sufficiently large $n$
\begin{equation}\label{Jiang-Seiver}
\ex_k(n,\cP^{(k)}_{2t+1})=\binom{n}{k}-\binom{n-t}{k}, \quad \mbox{and } \quad
\ex(n,\cP^{(k)}_{2t+2})=\binom{n}{k}-\binom{n-t}{k}+1.
  \end{equation}
For $\cP^{(k)}_{2t+1}$, the only extremal family consists of all the $k$-sets in $[n]$
 that intersect some fixed $t$-set $S$. For $\cP^{(k)}_{2t+2}$,
 the only extremal family consists of all the $k$-sets in $[n]$ that intersect
 some fixed set $S$ of $t$ vertices plus one additional $k$-set that is disjoint from $S$.

The Tur\'an problem for a linear path $\pkl$ was also solved in~\cite{FJS} for all $k\geq 4$
 and sufficiently large $n$
\begin{equation} \label{FJS-path}
 \ex_k(n,\podd)=\binom{n}{k}-\binom{n-t}{k}, \quad \mbox{and} \quad
  \ex_k(n,\peven)=\binom{n}{k}-\binom{n-t}{k}+\binom{n-t-2}{k-2}.
  \end{equation}
For $\podd$, the only extremal family consists of all
the $k$-sets in $[n]$ that meet some fixed $k$-set $S$.  For $\peven$,
the only extremal family consists of all
the $k$-sets in $[n]$ that intersect some fixed $t$-set $S$ plus all the $k$-sets in $[n]\setminus S$
that contain some two fixed elements.

For minimal cycles of length $\ell$, the same lower bound of $\binom{n}{k}-\binom{n-t}{k}$
 for $\cmkt$ applies,
where $t=\fl{(\ell-1)/2}$.
Answering a conjecture of Erd\H{o}s, Mubayi and Verstra\"ete~\cite{MV2} showed that
for all $k\geq 3$ and $n\geq 3k/2$, we have $\ex_k(n,\cC^{(k)}_3)=\binom{n-1}{k-1}$.
Later for general minimal cycles they~\cite{MV1} showed that the lower bound
for $\cmkt$ is tight up to a factor of $3$.
For $k\geq 3$ $,\ell\geq 4$, $t=\fl{(\ell-1)/2}$ they have
$\ex_3(n,\cC^{(3)}_\ell)\leq \frac{5\ell-1}{6}\binom{n}{2}$,
$\ex_4(n, \cC^{(4)}_\ell)\leq \frac{5\ell}{4}\binom{n}{3}$ and
$ \ex_k(n,\cC^{(k)}_4)=\binom{n-1}{k-1}+O(n^{k-2}) $.
For $k,\ell\geq 5$, they obtained
\begin{equation} \label{mv2}
t{n-1\choose k-1}+O(n^{k-2})\leq \ex_k(n,\cckl)\leq 3t{n-1\choose k-1}+O(n^{k-2}).
 \end{equation}
For $k,\ell\geq 3$, Mubayi and Verstra\"ete~\cite{MV1} conjectured their lower bound to be asymptotically tight.
\begin{conjecture} {\rm \cite{MV1}} \label{mv-conjecture}
Let $n,k,\ell\geq 3$ be integers and $t=\fl{\frac{\ell-1}{2}}$. Then as $n\to\infty$
$$ex_k(n,\cckl)=t\binom{n-1}{k-1}+O(n^{k-2}).$$
\end{conjecture}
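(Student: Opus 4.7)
The lower bound $|\cF_S|=\binom{n}{k}-\binom{n-t}{k}=t\binom{n-1}{k-1}+O(n^{k-2})$ with $|S|=t$ already realizes the conjectured value, so the content lies entirely in the matching upper bound. The rough estimate of Corollary \ref{cycle-bound}, $\ex_k(n,\cckl)\leq\ex_k(n,\ckl)\leq (k\ell-1)\binom{n}{k-1}$, has the correct order of magnitude; the task is to shave the leading constant from $k\ell-1$ down to exactly $t$ while allowing only an $O(n^{k-2})$ error.

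\emph{Strategy.} I would apply the delta system method in two stages, following the template of \cite{FJS}. Fix a large threshold $s=s(k,\ell)$ polynomial in $k\ell$. In Stage 1, iteratively peel off edges containing low-kernel-degree subsets, processing kernel sizes $r=k-1,k-2,\dots,1$ and only peeling ``fresh'' $r$-subsets not already absorbed by a larger peeled kernel. Bound \eqref{eq:2.3}, together with the shadow inequality \eqref{eq:KK1.1}, keeps the cumulative loss to $O(n^{k-2})$, so the surviving subfamily $\cF^*$ satisfies $|\cF|\leq|\cF^*|+O(n^{k-2})$ and enjoys a hierarchy of kernel regularity: for every $r\in\{2,\dots,k-1\}$, every $r$-subset of every edge of $\cF^*$ is a kernel at threshold $s$. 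In Stage 2, set $U:=\Ker^{(1)}_s(\cF^*)$ and show $|U|\leq t$; edges of $\cF^*$ meeting $U$ then contribute at most $t\binom{n-1}{k-1}$, and edges missing $U$ are absorbed into the $O(n^{k-2})$ error via a secondary round of Stage 1 cleaning applied to the sub-family avoiding $U$.

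\emph{Embedding and main obstacle.} To prove $|U|\leq t$, suppose for contradiction that $v_0,\dots,v_t\in U$ are distinct, and embed a copy of the linear cycle $\ckl\in\cckl$ in $\cF^*$. Assign each $v_i$ to be a private vertex of a designated edge $F_{2i+1}$ of the cycle skeleton, so that each of the $t+1$ odd-indexed edges carries exactly one anchor. Build the $F_i$'s one by one: at each step either use a singleton kernel $\{v_i\}$ to place an anchored edge, or use a pair kernel $\{c_{i-1},c_i\}$ (where the $c_j$'s are the shared vertices appearing in the cycle) to place a connector; in both cases the relevant kernel has degree $\geq s\gg k\ell$, so \eqref{eq:1.2} allows extension dodging the $O(k\ell)$ previously used vertices, exactly as in the proofs of Propositions \ref{expansion} and \ref{forest-upper_new3}. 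The main obstacle is that the connector pairs $\{c_{i-1},c_i\}$ are built from vertices chosen \emph{during} the embedding, not known to lie in $U$ a priori, so the Stage 1 hierarchy must guarantee that these pairs are 2-kernels of threshold $s$. For odd $\ell=2t+1$ this is especially delicate because closing the cycle forces two anchors to be placed on consecutive edges sharing a vertex, and the shared vertex must simultaneously form a 2-kernel with both anchors. Securing this kind of pair-density inside each singleton star is the technical heart of the argument; I expect it to follow from iterated sunflower reductions in the Frankl--F\"uredi tradition of \cite{furedi-1983,frankl-furedi-exact}, paralleling the way Proposition \ref{forest-upper_new3} densifies the 2-shadow of a $\cthreel$-free triple system before embedding the triangulated cycle $\TT_\ell^{(3)}$.
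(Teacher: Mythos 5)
Your Stage~1 terminus---a nonempty family $\cF^*$ in which \emph{every} $r$-subset ($2\leq r\leq k-1$) of every edge is a kernel at threshold $s$---is incompatible with $\cckl$-freeness, so the peeling as you describe it cannot halt there. If every $2$-subset and every $3$-subset of each edge of $\cF^*$ had kernel degree $\geq s=k\ell$, then starting from any edge you could greedily embed a triangulated cycle $\TT^{(3)}_\ell$ in $\Ker_s(\cF^*)$ exactly as in the proofs of Proposition~\ref{forest-upper_new3} and Lemma~\ref{cycle-corollary}, and then Proposition~\ref{expansion} expands it to a copy of $\ckl\subseteq\cF$. This is precisely what Lemma~\ref{cycle-corollary} (and Lemma~\ref{kernel-clique} behind it) formalizes: a homogeneous family whose intersection pattern has rank $k$, or rank $k-1$ of type~2, already contains a linear cycle. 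The structure actually forced on a cycle-free family (Theorem~\ref{partition2}) is the much weaker \emph{centralized} property: each edge $F$ has a single designated vertex $c(F)$ such that only the proper subsets of $F$ through $c(F)$ are guaranteed to be kernels; the other $r$-subsets need not be. That per-edge asymmetry is what the paper exploits---partitioning $\cF_1$ according to $c(F)$, counting via Kruskal--Katona, and proving the L-set theorem (Theorem~\ref{L-set})---and it is what your Stage~1 erases.

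Two further issues. First, the $O(n^{k-2})$ loss bound for iterated peeling is not automatic: removing edges can lower kernel degrees of surviving subsets, so the process must be iterated, and controlling the total removed requires the intersection-semilattice machinery (Lemma~\ref{homogeneous}, Lemma~\ref{rank-bound}, Lemma~\ref{partition1}); a naive peel over all $\binom{n}{k-1}$ candidate $(k-1)$-sets, each costing $O(1)$ edges by \eqref{eq:2.3}, already gives $O(n^{k-1})$. Second, your Stage~2 plan is not well-posed once Stage~1 is repaired to produce only a centralized family: the set $U=\Ker^{(1)}_s(\cF^*)$ is not a priori the right object (in the paper one must first find a specific $t$-set $S$ and a near-spanning $T$ with all $S$--$T$ pairs being $2$-kernels, and that takes the two-step Kruskal--Katona argument in Theorem~\ref{L-set}), and the proposed ``secondary round of cleaning applied to the sub-family avoiding $U$'' is circular, since nothing bounds the $1$-kernels of that secondary family. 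Finally, note that the paper proves the conjecture only for $k\geq 4$; its method fails for $k=3$, which your proposal would also need to address to cover the stated range $k\geq 3$.
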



\section{Main results: Tur\'an numbers of  cycles}\label{s:6}

As our main result, in Theorem~\ref{main}
we determine for all $k\geq 5$ and sufficiently large $n$
the exact value of the Tur\'an number of the linear cycle $\ckl$.
In Theorem~\ref{k=4}, we determine the exact Tur\'an numbers
of  minimal cycles $\cckl$ for all $k\geq 4$ and large $n$.
Theorem \ref{k=4}
confirms the truth of Conjecture~\ref{mv-conjecture} for all $k\geq 4$ in a stronger sense.
For $k\geq 5$ and odd $\ell$, Theorem~\ref{main} is even stronger than Theorem~\ref{k=4}.

\begin{theorem} {\bf(Main result)} \label{main}
Let $k,t$ be positive integers, $k\geq 5$. For sufficiently large $n$, we have
$$\ex_k(n,\codd)=\binom{n}{k}-\binom{n-t}{k}, \quad
  \mbox{and} \quad \ex_k(n,\ceven)=\binom{n}{k}-\binom{n-t}{k}+\binom{n-t-2}{k-2}.$$
For $\codd$, the only extremal family consists of all
the $k$-sets in $[n]$ that meet some fixed $k$-set $S$.  For $\ceven$,
the only extremal family consists of all
the $k$-sets in $[n]$ that intersect some fixed $t$-set $S$ plus all the $k$-sets in $[n]\setminus S$
that contain some two fixed elements.
\end{theorem}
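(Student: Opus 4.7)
The plan is to verify the lower bound by direct combinatorial inspection and to obtain the matching upper bound by the delta-system method of~\cite{furedi-1983,frankl-furedi-exact}, in the spirit of the linear-path result~\eqref{FJS-path} from~\cite{FJS}, using Proposition~\ref{expansion} to transfer structure from a large kernel graph back to $\cF$.

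For the lower bound, observe that every vertex of any $\ckl$ lies in at most two of its edges. For $\ell=2t+1$ this forces the degree-sum inequality $2t+1=\ell\leq 2|S|=2t$ whenever every edge of a hypothetical cycle meets $S$, which is impossible; hence $\cF_S$ is $\codd$-free. For $\ell=2t+2$ and the augmented family $\cF_S\cup \cB$, where $\cB=\{F\subseteq [n]\setminus S:\{a,b\}\subseteq F\}$, let $r$ be the number of edges of a would-be $\ceven$ lying in $\cB$; since members of $\cB$ pairwise contain $\{a,b\}$ while non-consecutive edges of a linear cycle are disjoint, $r\leq 2$. The cases $r\in\{0,1\}$ yield the same degree-sum contradiction on $S$, and $r=2$ forces two consecutive edges to share both $a$ and $b$, violating $|F_i\cap F_{i+1}|=1$.

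For the upper bound, let $\cF\subseteq \binom{[n]}{k}$ be $\ckl$-free with $|\cF|$ at least the claimed value, and fix threshold $s=k\ell$. Using the standard delta-system cleaning, I would pass to a homogeneous subfamily $\cF^{\ast}\subseteq\cF$ of size $\Omega(|\cF|)$ in which, for every $F\in\cF^{\ast}$, the kernel-lattice $\{D\subseteq F:D\in \Ker_s(\cF)\}$ has a fixed combinatorial shape. This produces a unique minimum kernel $J(F)\subseteq F$ of a common size $j$, and the $j$-graph $\cJ=\{J(F):F\in\cF^{\ast}\}$ sits inside $\Ker^{(j)}_s(\cF)$. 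The crucial consequence of Proposition~\ref{expansion} is that any subhypergraph $\cH$ of $\Ker_s(\cF)$ whose $k$-expansion already contains $\ckl$ would produce $\ckl\subseteq\cF$; hence $\cJ$ avoids every such $\cH$. I then split on $j$: when $j=0$ the system is a sunflower of arbitrary size and a $\ckl$ is spliced edge by edge using~\eqref{eq:1.2}; when $j\geq 2$ the Kruskal-Katona bound~\eqref{eq:KK1.1} on $|\partial_1(\cJ)|$ together with~\eqref{eq:2.3} absorbs the contribution into lower-order terms; the essential case is $j=1$, where $\cJ$ is a vertex set $T\subseteq[n]$ that must satisfy $|T|\leq t$. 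Once $|T|\leq t$ is secured, set $S:=T$; every edge of $\cF^{\ast}$ meets $S$, and bounding $\cF\setminus \cF^{\ast}$ via~\eqref{eq:2.3} yields $|\cF|\leq \binom{n}{k}-\binom{n-t}{k}$, with equality forcing $\cF=\cF_S$ in the odd case. For the even case, the sub-$k$-graph $\cG$ of edges disjoint from $S$ cannot contain a copy of $\mathbb{P}^{(k)}_2$ (two edges meeting in exactly one vertex), since such a pair combined with a $2t$-edge path through $S$, supplied by the homogeneous kernel structure of $\cF^{\ast}$, would close to a $\ceven$; Frankl's bound~\eqref{eq:EKR} then gives $|\cG|\leq \binom{n-t-2}{k-2}$ with equality only for a fixed-pair star on $[n]\setminus S$.

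The main obstacle will be the step $|T|\leq t$ in the case $j=1$. Having $t+1$ vertices of large kernel-degree only yields, via Proposition~\ref{expansion}, a matching of size $t+1$ in $\cF$, whereas $\ckl$ has matching number $\lfloor \ell/2\rfloor$, i.e., only $t$ in the odd case and exactly $t+1$ in the even case. To upgrade a matching into a linear cycle of the exact length $\ell$, one must exploit the further homogeneous structure of $\cF^{\ast}$: for pairs $u,v\in T$ the shadow $\partial(\cF^{\ast})$ contains large common neighborhoods of $u$ and $v$, allowing one to weave ``side-paths'' between disjoint $T$-edges and close the matching into a cycle of precisely the required parity. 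This cycle-closing step, absent in the path problem of~\cite{FJS}, is the delicate combinatorial heart of the proof and drives the parity distinction between the two extremal configurations.
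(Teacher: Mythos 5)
Your lower-bound verification is correct and essentially matches what is implicit in the paper: the degree-sum count over $S$ rules out $\codd$ in $\cF_S$, and the case analysis on the number of $\cB$-edges in a hypothetical $\ceven$ is the right way to handle the even case.

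The upper-bound outline, however, has a genuine gap at exactly the place you flag as the ``delicate combinatorial heart,'' and your proposed workaround would not close it. You reduce to a homogeneous family with a common minimum kernel size $j$ and then assert that in the case $j=1$ the vertex set $T$ of minimum kernels ``must satisfy $|T|\leq t$.'' That is not what happens, and cannot be forced directly: after the delta-system cleaning, the set of ``centers'' (the analogue of your $T$) is a set $L$ of size roughly $n^{\varepsilon}$, not $t$. The paper's Theorem~\ref{L-set} does much more work than a size bound on $T$: it first shows that $\Ker^{(3)}_s(\cF)$ is $\cthreel$-free (so has $O(n^2)$ edges), uses Kruskal--Katona and Lemma~\ref{binomial-sum} to concentrate the mass of $\cF$ onto edges whose center lies in a set $L$ of size $n^{\varepsilon}$, and then cleans further so that each surviving edge $F$ satisfies $F\cap L=\{c(F)\}$ and $\deg(F\setminus L)\leq t$. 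The crucial step is the observation that if $|A|\geq t+1$ then $\cF'_A$ is $\cT$-free (where $\cT$ is the $(k-1)$-uniform transversal of $\ckl$ obtained by deleting a strongly independent set of size $t+1$), which kills all $\cF_A$ with $|A|\geq t+1$, and then Claim~1 --- that distinct $t$-sets $A,B$ in $L$ have disjoint $2$-shadows $\partial_2(\cF'_A)\cap\partial_2(\cF'_B)=\emptyset$ --- is proved by explicitly building a linear cycle of length $2t+1$ or $2t+2$ inside $\Ker_s(\cF)$ and invoking Proposition~\ref{expansion}. It is this disjoint-shadow claim, combined again with Kruskal--Katona, that singles out one dominant $t$-set $A_1=S$. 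Your proposal of ``weaving side-paths between disjoint $T$-edges'' gestures at this but does not supply it; in particular, it does not explain why a single $t$-set emerges rather than a larger or spread-out collection of centers.

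A second, smaller divergence: the paper does not in fact reduce to a ``unique minimum kernel $J(F)$ of common size $j$'' and split on $j$. It classifies the intersection pattern $\cJ$ by rank (Lemma~\ref{rank-bound}) and, for rank $k-1$, by type~1 versus type~2 (Lemma~\ref{pattern-structure}), proving (Lemmas~\ref{kernel-clique} and~\ref{cycle-corollary}) that rank $k$ or rank $k-1$ of type~2 already forces $\ckl$, so that the surviving homogeneous pieces are all of type~1, i.e.\ centralized. This centralization --- every proper subset of $F$ containing $c(F)$ is a kernel --- is stronger than having a single minimum kernel $\{c(F)\}$, and it is precisely what the argument of Theorem~\ref{L-set} needs (e.g.\ to place $D\cup\{c(F)\}$ in $\Ker^{(3)}_s(\cF)$ for every $D\in\partial_2(\cA'_i)$). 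Your outline never isolates the ``type~2 forces a cycle'' lemma, which is the structural engine of the whole upper bound.

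Finally, the concluding comparison step is also more subtle than your sketch suggests: the paper does not simply bound $\cF\setminus\cF_S$ but introduces a set $\cD$ of missing edges and shows $|\cD|=\Omega(zn^{k-2})$ dominates all the error terms $|\cG_0|+|\cA|+|\cB|=o(zn^{k-2})$, which is what rules out $Z\neq\emptyset$ in the extremal case and gives uniqueness of the extremal family. Your outline handles $\cG_2$ via Frankl's bound~\eqref{eq:EKR} correctly, but omits the $\cD$-versus-error-terms comparison entirely.
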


Note that the case $\ell=3$ was already proved in~\cite{frankl-furedi-exact}.

\begin{theorem} \label{k=4}
Let $t$ be a positive integer, $k\geq 4$.
For sufficiently large $n$, we have
$$\ex_k(n,{\cal C}^{(k)}_{2t+1})=\binom{n}{k}-\binom{n-t}{k}, \quad \mbox{and} \quad
\ex_k(n,{\cal C}^{(k)}_{2t+2})=\binom{n}{k}-\binom{n-t}{k}+1.
 $$
For ${\cal C}^{(k)}_{2t+1}$, the only extremal family consists of all
the $k$-sets in $[n]$ that meet some fixed $k$-set $S$.  For ${\cal C}^{(k)}_{2t+2}$,
the only extremal family consists of all
the $k$-sets in $[n]$ that intersect some fixed $t$-set $S$ plus one additional $k$-set outside $S$.
\end{theorem}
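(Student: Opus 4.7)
\textbf{Proof plan for Theorem~\ref{k=4}.}

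The lower bounds follow from the constructions. For $\cF_S$ with $|S|=t$ against $\cC^{(k)}_{2t+1}$ when $t\geq 2$: in any minimal $(2t+1)$-cycle $F_1,\dots,F_{2t+1}\subseteq\cF_S$, each $F_i$ meets $S$, so pigeonhole yields some $v\in S$ contained in at least $\lceil(2t+1)/t\rceil=3$ of the edges; these three edges pairwise intersect at $v$, and in a minimal cycle of length $\geq 4$ non-consecutive edges are disjoint, forcing the three to be pairwise cyclically consecutive---impossible, since the two cyclic neighbors of any edge are themselves non-consecutive. An analogous argument (noting $F_0$ occupies at most one position of any would-be cycle) handles $\cF_S\cup\{F_0\}$ against $\cC^{(k)}_{2t+2}$. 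The boundary case $t=1,\ell=3$ is the Mubayi--Verstra\"ete theorem cited in Section~\ref{s:5}.

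For the upper bound, observe first that for odd $\ell=2t+1$ with $k\geq 5$ the inclusion $\ckl\in\cckl$ together with Theorem~\ref{main} already yields the claimed bound and uniqueness. The remaining cases---$k=4$ with either parity and $k\geq 4$ with $\ell=2t+2$---require a direct delta-system argument. Fix $\cF\subseteq\binom{[n]}{k}$ avoiding $\cckl$. Corollary~\ref{cycle-bound} gives $|\cF|=O(n^{k-1})$. Apply the delta system method of~\cite{furedi-1983,frankl-furedi-exact} (Section~\ref{s:7}) to extract a positive-density homogeneous subfamily $\cF^*\subseteq\cF$ whose edges share a uniform intersection semilattice. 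The operational principle is Proposition~\ref{expansion}: with threshold $s=k\ell$, $\Ker_s(\cF)$ contains no hypergraph $\cH$ whose $k$-expansion is a minimal $\ell$-cycle; in particular $\Ker^{(2)}_s(\cF)$, viewed as a graph, is $C_\ell$-free, since the $k$-expansion of $C_\ell$ is the linear cycle $\ckl\in\cckl$.

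For odd $\ell=2t+1$, the central claim is that $\cF$ has matching number at most $t$. Given a matching $M_1,\dots,M_{t+1}\in\cF$, the richness of $\Ker^{(2)}_s(\cF)$ (forced by the density of $\cF^*$) lets us select kernel $2$-sets $\{x_j,y_j\}$ with $x_j\in M_j$ and $y_j\in M_{j+1}$ for $j=1,\dots,t-1$, plus $\{x_t,z\}$ and $\{z,y_0\}$ for a common closing vertex $z$ outside all $M_i$'s, with $x_t\in M_t$, $y_0\in M_1$. Proposition~\ref{expansion} then expands these $t+1$ kernels simultaneously into distinct edges $E_0,E_1,\dots,E_t\in\cF$ using pairwise disjoint sets of expansion vertices; the cyclic sequence $M_1,E_1,M_2,E_2,\dots,M_t,E_t,E_0$ forms a minimal $(2t+1)$-cycle, contradicting $\cckl$-freeness. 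Frankl's exact matching theorem~\cite{frankl-matching} then delivers $|\cF|\leq\binom{n}{k}-\binom{n-t}{k}$, with equality only for $\cF=\cF_S$. For even $\ell=2t+2$ a matching of size $t+1$ is consistent with $\cckl$-freeness; instead, we identify a $t$-set $S$ absorbing most edges of $\cF$ and show $|\cF\setminus\cF_S|\leq 1$: two outside edges $F_0,F_0'$ together with two kernel paths through $S$ (furnished by the still-abundant $\Ker^{(2)}_s(\cF)$) would assemble via Proposition~\ref{expansion} into a minimal $(2t+2)$-cycle, with Frankl's one-intersection bound~\eqref{eq:EKR} controlling the permissible overlap of outside edges.

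The main obstacle throughout is this expansion step: one must produce kernel $2$-sets with prescribed endpoints in the matching (or in the outside edges) and then expand them so that every non-consecutive pair in the resulting cycle is genuinely disjoint, not merely almost so. The uniform intersection structure of $\cF^*$ from the delta-system preprocessing is what makes this bookkeeping tractable, and the case $k=4$ requires the most delicate handling since the number of expansion vertices per edge is smallest, leaving the least slack in the disjointness argument.
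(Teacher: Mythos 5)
Your lower-bound verification is correct, and your observation that for odd $\ell$ and $k\geq 5$ the bound and uniqueness follow from Theorem~\ref{main} (since $\ckl\in\cckl$) is also sound. The problems are in the upper-bound argument for the remaining cases.

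The central gap is the claim that for odd $\ell=2t+1$ a dense $\cckl$-free family has matching number at most~$t$, proved by connecting an arbitrary matching $M_1,\dots,M_{t+1}\subseteq\cF$ via kernel $2$-sets $\{x_j,y_j\}$ with $x_j\in M_j$, $y_j\in M_{j+1}$. The delta-system machinery does not supply such kernels: Lemma~\ref{homogeneous} only guarantees that the restrictions $F[I]$ of edges $F$ of the homogeneous subfamily $\cF^*$ along the intersection pattern $\cJ$ are kernels. A matching in $\cF$ need not lie in $\cF^*$, its vertices need not be touched by $\Ker^{(2)}_s(\cF)$ at all, and even when two vertices of different matching edges both participate in the kernel graph there is no reason the particular pair should be a kernel. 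Without that, Proposition~\ref{expansion} cannot be invoked to stitch the $M_i$'s into a minimal cycle, and the appeal to Erd\H{o}s'/Frankl's matching theorem is unsupported. The even case suffers the same problem (``kernel paths through $S$'' are asserted, not constructed), and the uniqueness claims inherit the gap.

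The paper's route is genuinely different and supplies exactly what your argument is missing. Theorem~\ref{partition2} first shows that, after discarding $O(n^{k-2})$ edges, $\cF$ is \emph{centralized}: every remaining edge $F$ has a vertex $c(F)$ through which all proper subsets of $F$ are kernels. Theorem~\ref{L-set} then analyzes the families $\cA_i=\{F:c(F)=i\}$ via Kruskal--Katona and Proposition~\ref{forest-upper_new3}, concentrates the central elements in an $O(n^{\varepsilon})$-sized set $L$, and uses a disjointness claim on the $2$-shadows $\partial_2(\cF'_A)$ (Claim~1, itself a cycle-building argument) to extract a \emph{specific} $t$-set $S\subseteq L$ and a set $T$ of size $(1-o(1))n$ with every $S$--$T$ pair in $\Ker^{(2)}_s(\cF)$. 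This concrete complete bipartite kernel graph on $S\times W$ (with $W\supseteq T$ maximal) is what makes the cycle-building in the final step (Claim~2) legitimate. The conclusion is then not a matching bound but a local-versus-global count: the few edges of $\cF$ outside $\cF_S$ (partitioned into $\cG_0,\cG_1,\cG_2$ according to $|F\cap W|$) are shown to be dominated by the missing edges $\cD\subseteq\cF_S\setminus\cF$ forced by the maximality of $W$, yielding $|\cF|\leq|\cF_S|$ (plus at most one for even $\ell$). This structure also delivers uniqueness and handles $k=4$ (where Theorem~\ref{main} is unavailable and the type-$2$ homogeneous families must be excluded via Proposition~\ref{4cycle-corollary} rather than Lemma~\ref{cycle-corollary}).
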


Our method does not work for $k=3$, however,
 we were informed that Kostochka, Mubayi, and Verstra\"ete \cite{KMV}
 have some new results on this case.

Note that the answers in the main theorem are exactly the same as for $\podd$ and $\peven$
 with the same extremal constructions as well.
However, neither the path result nor the cycle result imply each other and the proofs for cycles
are more involved  and require additional ideas.


\section{The delta-system method}\label{s:7}

In this section, we introduce our main tools we need from the delta-system method.
Given a hypergraph
$\cF$ and an edge $F$ of $\cF$, we define the {\it intersection structure} of $F$ relative to
$\cF$ to be
$$\cI(F,\cF)=\{F\cap F': F'\in \cF, F'\neq F\}.$$

Let $\cF$ be a $k$-partite $k$-graph with a $k$-partition
$(X_1,\ldots, X_k)$. Hence, each edge of $\cF$ contains
 exactly one element of each $X_i$.
Given any subset $S$ of $[n]$, let
$$\Pi(S)=\{i: S\cap X_i\neq \emptyset\} \subseteq [k].$$
So $\Pi(S)$ records which parts in the given $k$-partition that $S$ meets.
If $\cL$ is a collection of subsets of $[n]$, then we define
$$\Pi(\cL)=\{\Pi(S): S\in \cL\} \subseteq 2^{[k]}.$$
We will call $\Pi(\cI(F,\cF))$ the {\it intersection pattern} of $F$ relative to $\cF$.
Given $F\in \cF$ and $I\subseteq [k]$, let $F[I]=F\cap(\bigcup_{i\in I} X_i)$.
So $F[I]$ is the restriction of $F$ onto those parts indexed by $I$.

\begin{lemma}{\bf (The intersection semilattice lemma~\cite{furedi-1983})}  \label{homogeneous}
For any positive integers $s$ and $k$, there exists a positive constant $c(k,s)$ such that
{\bf every}
family $\cF\subseteq {[n]\choose k}$ contains a subfamily $\cF^*\subseteq \cF$ satisfying
\begin{enumerate}
\item $|\cF^*|\geq c(k,s)|\cF|$.
\item $\cF^*$ is $k$-partite, together with a $k$-partition $(X_1,\ldots, X_k)$.
\item There exists a family $\cJ$ of proper subsets of $[k]$ such that
  $\Pi(\cI(F,\cF^*))=\cJ$ holds for all $F\in \cF^*$.
\item $\cJ$ is closed under intersection, i.e.,
 for all $I,I'\in \cJ$ we have $I\cap I'\in \cJ$ as well.
\item For every $F\in \cF^*$, and every $I\in \cJ$, $F[I]\in \Ker_s(\cF)$.
\end{enumerate}
\end{lemma}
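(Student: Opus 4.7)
The plan is a three-stage refinement of $\cF$, each stage losing only a multiplicative constant depending on $k$ and $s$, so that the product gives the promised $c(k,s)$. \textbf{Stage 1 ($k$-partite reduction):} take a uniformly random coloring $\chi:[n]\to[k]$; each edge $F\in\cF$ is ``rainbow'' with probability $k!/k^k$. Fixing a good coloring produces a $k$-partite subfamily $\cF_0\subseteq\cF$ of size $|\cF_0|\geq(k!/k^k)|\cF|$ with parts $(X_1,\ldots,X_k)$, thereafter used in the definitions of $F[I]$ and $\Pi$.

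\textbf{Stage 2 (signature pigeonhole):} associate to each $F\in\cF_0$ the signature
\[
\sigma(F)=\Bigl(\Pi(\cI(F,\cF_0)),\ \{I\subsetneq[k]:F[I]\in\Ker_s(\cF)\}\Bigr),
\]
which takes at most $N:=2^{2^{k+1}}$ values, and pigeonhole to $\cF_1\subseteq\cF_0$ with $|\cF_1|\geq|\cF_0|/N$ on which $\sigma$ is a constant $(\cJ_0,\cS_0)$. Set $\cJ:=\cJ_0\cap\cS_0$ as our candidate for property (3). Inside $\cF_1$ the pattern $\Pi(\cI(F,\cF_1))$ may have shrunk below $\cJ_0$ non-uniformly and may still realize forbidden types in $\cJ_0\setminus\cS_0$, so I would re-pigeonhole and discard edges participating in forbidden intersections. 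Each round strictly decreases the set of types still realized in some edge's pattern (of which there are at most $2^k$), so the process stabilizes after a bounded number of rounds at a family $\cF^*$ satisfying properties (3) and (5).

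\textbf{Stage 3 (closure) and main obstacle:} it remains to enforce closure of $\cJ$ under intersection (property 4). For $I_1,I_2\in\cJ$ with witnesses $F_1,F_2\in\cF^*$ (so $\Pi(F\cap F_j)=I_j$), the triple intersection $F\cap F_1\cap F_2$ has $\Pi$-type contained in $I_1\cap I_2$, but neither equal to $I_1\cap I_2$ in general nor realized as a pairwise intersection. The main obstacle is precisely the non-closure of $\Ker_s$ under intersection of index sets: even if $F[I_1]$ and $F[I_2]$ are $s$-kernels, $F[I_1\cap I_2]=F[I_1]\cap F[I_2]$ need not be. The remedy is to start with $s$ large as a function of $k$ (large enough to absorb a factor $k$ per level of the Boolean lattice), then by downward induction on $|I|$ use property (5) and \eqref{eq:1.2} to realize each element of the intersection-closure of the pattern as $\Pi(F\cap F')$ for some $F'\in\cF$, followed by a final pigeonhole that places such $F'$ inside $\cF^*$, yielding $\cJ$ closed under intersection with constant $c(k,s)=(k!/k^k)\cdot N^{-O(N)}$ depending only on $k$ and $s$.
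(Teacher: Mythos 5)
The paper does not give a proof of Lemma~\ref{homogeneous}; it cites it to~\cite{furedi-1983}, so there is no in-paper argument to compare against. Evaluating your proposal on its own terms, there are two genuine gaps. In Stage~2, after re-pigeonholing to a family $\cF_i$ with uniform pattern $\cJ_i$, if $\cJ_i$ still contains a forbidden $I\in\cJ_0\setminus\cS_0$, then by homogeneity \emph{every} $F\in\cF_i$ realizes $I$ (there is some $F'\in\cF_i$ with $\Pi(F\cap F')=I$). So ``discard edges participating in forbidden intersections'' discards all of $\cF_i$; and discarding one endpoint per forbidden pair is not justified either, since the conflict graph could have independence ratio $o(1)$ and you give no bound. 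The assertion that ``each round strictly decreases the set of types still realized'' is also unsupported: re-pigeonholing does not guarantee that any previously realized type disappears. Without a quantitative argument (e.g.\ a sunflower/covering lemma estimate controlling how many edges meet a given non-kernel set), you cannot conclude that a constant fraction survives.

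Stage~3 is the more serious problem. Enlarging $s$ does \emph{not} give closure of $\Ker_s$ under the operation $I_1,I_2\mapsto I_1\cap I_2$: even if $F[I_1]$ and $F[I_2]$ have huge kernel degree, an $s$-star with kernel $F[I_1]$ consists of sets that pairwise intersect in exactly $F[I_1]\supsetneq F[I_1\cap I_2]$, so it yields no star with the smaller kernel, and there is no ``factor $k$ per lattice level'' that fixes this. Using \eqref{eq:1.2} you can find some $F''\in\cF$ with $F\cap F''=F[I_1\cap I_2]$, but $F''$ need not lie in your subfamily, and you cannot ``pigeonhole $F'$ into $\cF^*$'': adding arbitrary witnesses to $\cF^*$ destroys the uniform-pattern property you established in Stage~2, and you would have to re-establish it, returning you to the same difficulty. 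What is needed is an argument that simultaneously controls the realized pattern and its closure, rather than first fixing a pattern and then trying to patch in the missing meets; as written the proposal does not establish item~(4), and hence does not prove the lemma.
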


\begin{definition}
{\rm We call a family $\cF^*$ that satisfies items (2)-(5) of Lemma~\ref{homogeneous}
 {\it $(k,s)$-homogeneous} with intersection pattern $\cJ$.}
\end{definition}

Given a family $\cL$ of subsets of $[k]$, the {\it rank} of $\cL$ is the minimum size
 of a set in $[k]$ that is not contained in any member of $\cL$. Formally
$$r(\cL)=\min\{|D|: D\subseteq [k], \not\exists B\in \cL, D\subseteq B\}.$$

The next three lemmas were used in many earlier papers, e.g.,
 we can refer to~\cite{frankl-furedi-exact, FJS}.

\begin{lemma}{\bf (The rank bound)} \label {rank-bound}\enskip
Let $k,s$ be positive integers. Let $\cF^*$ be a
$(k,s)$-homogeneous family on $n$ vertices with intersection
pattern $\cJ$. If $r(\cJ)=p$, then $|\cF^*|\leq {n\choose p}$.
\end{lemma}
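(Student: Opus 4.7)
The plan is to exploit the definition of rank to pin down an injective $p$-set invariant of each edge of $\cF^*$. By definition of $r(\cJ)=p$, I can fix a $p$-set $D\subseteq [k]$ such that no $B\in \cJ$ contains $D$. The goal becomes: show that the restriction map $F\mapsto F[D]$ is an injection from $\cF^*$ into $\binom{[n]}{p}$.

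To carry this out, I would take any two distinct $F,F'\in \cF^*$ and analyze $\Pi(F\cap F')$. By item (3) of Lemma~\ref{homogeneous}, $\Pi(F\cap F')\in \cJ$, and by our choice of $D$ there must exist some $i\in D$ with $i\notin \Pi(F\cap F')$. This says $F\cap F'$ contains no vertex from $X_i$; in particular the (unique) element of $F\cap X_i$ is not in $F'$. Since that element lies in $F[D]$, it follows that $F[D]\not\subseteq F'$, and in particular $F[D]\neq F'[D]$.

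Thus $F\mapsto F[D]$ is injective into the $p$-subsets of $[n]$, so $|\cF^*|\le \binom{n}{p}$, which is the desired bound.

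There is no real obstacle here; the only thing to be careful about is the translation between the combinatorial statement ``$\Pi(F\cap F')\not\supseteq D$'' and the geometric statement ``$F\cap F'$ misses some coordinate of $F[D]$,'' which uses the $k$-partiteness (item (2)) in an essential way. Note that items (4) and (5) of Lemma~\ref{homogeneous}, the semilattice and kernel properties, are not needed for this bound; they will be used elsewhere in the paper for stronger structural arguments.
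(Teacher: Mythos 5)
Your proof is correct, and it is the standard argument for this lemma. The paper itself does not include a proof but defers to the references \cite{frankl-furedi-exact, FJS}; the argument given there (and in F\"uredi's original delta-system papers) is precisely the one you describe: choose a $p$-set $D\subseteq[k]$ witnessing the rank, observe via item (3) of Lemma~\ref{homogeneous} that for distinct $F,F'$ the pattern $\Pi(F\cap F')$ lies in $\cJ$ and hence omits some coordinate of $D$, and conclude that $F\mapsto F[D]$ is injective. Your remark that items (4) and (5) of the homogeneity definition are not used here is also accurate; only $k$-partiteness and the uniform intersection pattern are needed for the rank bound.
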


\begin{lemma} \label{pattern-structure}
Let $k\geq 3$ be a positive integer. Let $\cL$ be a family of proper subsets of $[k]$ that is
closed under intersection.
\begin{enumerate}
\item If $\cL$ has rank $k$, then it contains all the proper subsets of $[k]$.
\item If $\cL$ has rank $k-1$, then the elements of $[k]$ can be listed as
 $x_1,x_2,\ldots, x_t, x_{t+1},\ldots, x_k$ such that for every
 $t+1\leq i\leq k$, $[k]\setminus \{x_i\} \in \cL$ and for all $1\leq i<j\leq t$, $[k]\setminus
\{x_i,x_j\}\in \cL$. If $t=1$, then we say that $\cL$ is of {\bf type 1}. If
$t\geq 2$,  then we say that $\cL$ is of {\bf type 2}.
If $\cL$ is of type 1, then there exists an element $x\in [k]$ such that $\cL$ contains
 all the proper subsets of $[k]$ that contains $x$; we call $x$ the {\bf central element} of $\cL$.
If $\cL$ is of type 2, then
 $\forall C\subseteq \{x_1,\ldots, x_t\}, \forall D\subseteq \{x_{t+1},\ldots, x_k\}$,
 where $|C|\leq t-2$, we have $C\cup D\in \cL$.
\end{enumerate}
\end{lemma}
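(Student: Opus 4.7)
\medskip

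\noindent\textbf{Proof plan for Lemma~\ref{pattern-structure}.}
The whole argument is bookkeeping based on the fact that, since $\cL$ consists of proper subsets of $[k]$, the only member of $\cL$ that can contain a $(k-1)$-subset $[k]\setminus\{x\}$ is that $(k-1)$-subset itself. Combined with the definition of rank, this will let me read off which $(k-1)$-sets lie in $\cL$, and then intersection-closure will generate everything else.

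\emph{Part 1.} Rank $k$ says every proper subset $D$ of $[k]$ of size at most $k-1$ is contained in some member of $\cL$. Applied to $(k-1)$-subsets, the observation above forces every $(k-1)$-subset of $[k]$ to be in $\cL$. Intersecting pairs of these covers every $(k-2)$-subset; iterating, every proper subset of $[k]$ appears as a finite intersection of $(k-1)$-subsets, so intersection-closure places all proper subsets in $\cL$.

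\emph{Part 2.} First I set $T=\{x\in[k]:[k]\setminus\{x\}\notin\cL\}$ and $t=|T|$; rank $k-1$ forces $t\ge 1$. Label the elements of $[k]$ so that $T=\{x_1,\dots,x_t\}$ and $[k]\setminus T=\{x_{t+1},\dots,x_k\}$. For $i<j\le t$, the $(k-2)$-set $[k]\setminus\{x_i,x_j\}$ is covered by some member of $\cL$ by the rank hypothesis; the only proper supersets of $[k]\setminus\{x_i,x_j\}$ are itself, $[k]\setminus\{x_i\}$, and $[k]\setminus\{x_j\}$, and the last two are excluded by the choice of $T$, so $[k]\setminus\{x_i,x_j\}\in\cL$. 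This verifies the list of members claimed in the statement.

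If $t=1$, I argue that $x_1$ is central: for any proper subset $S\ni x_1$ I write
\[
S=\bigcap_{y\in[k]\setminus S}\bigl([k]\setminus\{y\}\bigr),
\]
and each factor lies in $\cL$ because $y\neq x_1$. Intersection-closure gives $S\in\cL$.

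If $t\ge 2$, given $C\subseteq\{x_1,\dots,x_t\}$ with $|C|\le t-2$ and $D\subseteq\{x_{t+1},\dots,x_k\}$, I build $C\cup D$ as an intersection of known members of $\cL$: for each $y\in[k]\setminus(C\cup D)$ I choose a witness $S_y\in\cL$ with $C\cup D\subseteq S_y$ and $y\notin S_y$, so that $C\cup D=\bigcap_y S_y$. If $y=x_i$ with $i>t$ I take $S_y=[k]\setminus\{x_i\}\in\cL$; if $y=x_i$ with $i\le t$, I use the assumption $|T\setminus C|\ge 2$ to pick some $x_j\in (T\setminus C)\setminus\{y\}$ and set $S_y=[k]\setminus\{x_i,x_j\}\in\cL$, which contains $C\cup D$ because neither $x_i$ nor $x_j$ lies in $C\cup D$. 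Intersection-closure of $\cL$ then yields $C\cup D\in\cL$, treating the degenerate case $C\cup D=\emptyset$ the same way (the finite intersection is empty precisely because each $y\in[k]$ is excluded by its $S_y$).

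The only slightly delicate point is guaranteeing the condition $|T\setminus C|\ge 2$ in the type 2 step; this is exactly why the hypothesis $|C|\le t-2$ appears, and it is the reason the case $|C|=t-1$ (which would only guarantee a single missing element of $T$) is excluded from the conclusion. Everything else is routine bookkeeping with the identity $S=\bigcap_{y\notin S}([k]\setminus\{y\})$ and the explicit list of $(k-1)$- and $(k-2)$-sets known to be in $\cL$.
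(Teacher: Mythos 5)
Your proof is correct. The paper does not actually include a proof of this lemma; it defers to the earlier references (Frankl--F\"uredi and F\"uredi--Jiang--Seiver), where essentially this same argument appears. Your treatment is the standard one: use the fact that the only member of a family of \emph{proper} subsets of $[k]$ containing a given $(k-1)$-set is that $(k-1)$-set itself to read off which $(k-1)$-sets (and, for rank $k-1$, which $(k-2)$-sets among $T$) must lie in $\cL$, then generate everything else from the identity $S=\bigcap_{y\notin S}([k]\setminus\{y\})$ together with closure under intersection. The one point worth making fully explicit, which you handle implicitly, is that $|C\cup D|\le(t-2)+(k-t)=k-2$, so the index set $[k]\setminus(C\cup D)$ is never empty and each witness $S_y$ (of size $k-1$ or $k-2$) genuinely can contain $C\cup D$; and that rank $k-1\ge 2$ guarantees $\cL\neq\emptyset$, so finite intersections of members are legitimately members. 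With those noted, the proof is complete and matches the intent of the cited sources.
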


\begin{lemma}\label{partition1} {\bf (The partition lemma)}\enskip
Let $n,k,s$ be positive integers, let $\cF\subseteq {[n]\choose k}$.
Then $\cF$ can be partitioned into subfamilies $\cG_1,\cG_2,\dots, \cG_{m}$ and $\cF_0$ such that
 $|\cF_{0}|\leq \frac{1}{c(k,s)}{n\choose k-2}$ and for $1\leq i\leq m$
 each $\cG_i$ is $(k,s)$-homogeneous with intersection pattern $\cJ_i$
 of rank at least $k-1$.
\end{lemma}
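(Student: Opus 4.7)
The plan is to apply Lemma~\ref{homogeneous} iteratively, peeling off $(k,s)$-homogeneous subfamilies one at a time until the remainder is small enough to play the role of $\cF_0$. Write $c=c(k,s)$ for the constant from Lemma~\ref{homogeneous} and set the threshold $T:=\frac{1}{c}\binom{n}{k-2}$. Put $\cF^{(0)}:=\cF$. At step $i\geq 0$, if $|\cF^{(i)}|\leq T$, halt and set $\cF_0:=\cF^{(i)}$. Otherwise, invoke Lemma~\ref{homogeneous} on $\cF^{(i)}$ to extract a $(k,s)$-homogeneous subfamily $\cG_{i+1}\subseteq \cF^{(i)}$ with some intersection pattern $\cJ_{i+1}$ and $|\cG_{i+1}|\geq c|\cF^{(i)}|$, then put $\cF^{(i+1)}:=\cF^{(i)}\setminus \cG_{i+1}$.

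The crucial point is that the extracted pattern $\cJ_{i+1}$ is automatically of rank at least $k-1$. Indeed, the extraction is only performed while $|\cF^{(i)}|>T$, so
$$|\cG_{i+1}|\geq c|\cF^{(i)}|> cT=\binom{n}{k-2}.$$
If we had $r(\cJ_{i+1})\leq k-2$, the rank bound (Lemma~\ref{rank-bound}) would give $|\cG_{i+1}|\leq \binom{n}{k-2}$, a contradiction. Hence $r(\cJ_{i+1})\geq k-1$, which is exactly the conclusion demanded for each $\cG_i$.

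Termination is immediate: each extracted $\cG_{i+1}$ has more than $\binom{n}{k-2}$ edges, so the process stops after finitely many (say $m$) steps. At that point $|\cF_0|=|\cF^{(m)}|\leq T=\frac{1}{c(k,s)}\binom{n}{k-2}$, and $\cF=\cF_0\cup \cG_1\cup\cdots\cup \cG_m$ is the desired partition (disjointness is built into the construction). There is really no serious obstacle here; the content of the lemma is just the clean trade-off between the density guarantee from Lemma~\ref{homogeneous} and the size ceiling from Lemma~\ref{rank-bound}, which together force low-rank patterns to be absent whenever the residual family is above the threshold.
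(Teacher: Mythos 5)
Your proof is correct and uses the same two ingredients as the paper---iterated extraction via the intersection semilattice lemma (Lemma~\ref{homogeneous}) combined with the rank bound (Lemma~\ref{rank-bound})---with only a cosmetic difference in the stopping rule: you halt once the residual family drops below the size threshold $\frac{1}{c(k,s)}\binom{n}{k-2}$ and then deduce that every extracted pattern must have rank at least $k-1$, whereas the paper halts at the first extracted pattern of rank at most $k-2$ and then deduces from $|\cG_{m+1}|\geq c(k,s)|\cF_0|$ that the residual is small. These are equivalent ways of organizing the same density/rank trade-off.
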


\begin{proof}
Apply Lemma~\ref{homogeneous} to $\cF$ to get a $(k,s)$-homogeneous subfamily $\cG_1$ with
intersection pattern $\cJ_1$ such that $|\cG_1|\geq c(k,s) |\cF|$.
Then apply Lemma~\ref{homogeneous} again to $\cF\setminus \cG_1$
 to get a $(k,s)$-homogeneous subfamily $\cG_2$ with intersection pattern $\cJ_2$ such
 that $|\cG_2|\geq c(k,s)|\cF\setminus \cG_1|$. We continue like this.
Let $m$ be the smallest nonnegative integer such that $\cJ_{m+1}$ has rank $k-2$ or less
 and let $\cF_0= \cF \setminus (\cup_{i\leq m} \cG_i)$.
By our procedure, $|\cG_{m+1}|\geq c(k,s)|\cF_0|$ and Lemma~\ref{rank-bound} gives the upper bound.
\end{proof}


\section{Homogeneous families without cycles are not of type 2}\label{s:8}

The aim of this section is to describe the typical intersection structures
 of the members of a $k$-uniform hypergraph avoiding cycles.

\begin{definition}
{\rm A set-family $\cF$ is {\it centralized} with threshold $s$ if $\forall F\in \cF$ there exists
an element $c(F)\in F$ such that if $D$ is a proper subset of $F$ containing $c(F)$ then $D\in \Ker_s(\cF)$.
We call $c(F)$ a {\it central element} of $F$. (The choice of $c(F)$ may not be unique, but we will fix one.)}
\end{definition}

\begin{theorem}\label{partition2} {\bf (The partition theorem)}
Let $k,\ell, s$ be positive integers, where $k\geq 4$, $\ell\geq 3$, and $s\geq k\ell$.
Let $\cF\subseteq {[n]\choose k}$. If $k=4$, then suppose $\cF$ contains no member
of ${\cal C}^{(4)}_\ell$.
If $k\geq 5$, then suppose $\ckl\not\subseteq\cF$.
Then $\cF$ can be partitioned into subfamilies $\cF_1,\cF_0$ such that $\cF_1$ is
centralized with threshold $s$ and  $|\cF_0|\leq \frac{1}{c(k,s)}{n\choose k-2}$.
\end{theorem}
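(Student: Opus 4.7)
The plan is to apply Lemma~\ref{partition1} and then classify each resulting homogeneous family by its intersection pattern. First I apply Lemma~\ref{partition1} with threshold $s$ to write $\cF=\cF_0\cup\cG_1\cup\cdots\cup\cG_m$ with $|\cF_0|\leq \frac{1}{c(k,s)}\binom{n}{k-2}$ and each $\cG_i$ being $(k,s)$-homogeneous with pattern $\cJ_i$ of rank at least $k-1$. By Lemma~\ref{pattern-structure}, each $\cJ_i$ is of rank $k$, rank $k-1$ of type~1, or rank $k-1$ of type~2. I will show the first two cases produce centralized $\cG_i$'s, while the type~2 case contradicts the cycle-free hypothesis.

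The rank-$k$ and type~1 cases are immediate from property~(5) of Lemma~\ref{homogeneous}. In the rank-$k$ case every proper subset of $[k]$ is in $\cJ_i$, so taking any element of $F\in\cG_i$ as $c(F)$ ensures that every proper subset of $F$ containing $c(F)$ lies in $\Ker_s(\cF)$. In the type~1 case, with central element $x^*$, Lemma~\ref{pattern-structure}(2) guarantees every proper subset of $[k]$ containing $x^*$ belongs to $\cJ_i$, and the choice $c(F):=F\cap X_{x^*}$ does the job.

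The main obstacle is ruling out type~2. Suppose $\cJ_i$ is type~2 with parameter $t\geq 2$; I will construct a linear cycle $\ckl$ in $\cF$ (when $k\geq 5$), respectively a member of $\cC^{(k)}_\ell$ in $\cF$ (when $k=4$), contradicting the hypothesis. Closure of intersection applied to the maximal patterns of $\cJ_i$---namely $[k]\setminus\{x_i,x_j\}$ ($i<j\leq t$) and $[k]\setminus\{x_i\}$ ($i>t$)---forces every subset of $\{x_{t+1},\ldots,x_k\}$ to lie in $\cJ_i$. Hence by property~(5) of Lemma~\ref{homogeneous}, for every $F\in\cG_i$ the complete graph $K_{k-t}$ on the outer vertices of $F$ (those in $\bigcup_{j>t}X_{x_j}$) sits inside the 2-kernel graph $\Ker^{(2)}_s(\cF)\subseteq \Ker^{(2)}_{k\ell}(\cF)$. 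When $\ell\leq k-t$, I extract a $C_\ell\subseteq K_{k-t}$ and invoke Proposition~\ref{expansion} with $\cH:=C_\ell$ (whose $k$-expansion is precisely $\ckl$) to get $\ckl\subseteq\cF$, the desired contradiction.

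For the remaining ranges of $(k,t,\ell)$, the plan is to enlarge the 2-kernel graph by adjoining further $\cG_i$-edges $F^{(2)},F^{(3)},\ldots$ supplied by the patterns $[k]\setminus\{x_j\}\in\cJ_i$ ($j>t$): each new $F^{(j)}$ differs from its predecessor in only one outer coordinate, so their outer $K_{k-t}$'s overlap in $k-t-1$ vertices, and after about $\lceil \ell/(k-t-1)\rceil$ such extensions the combined 2-kernel graph contains $C_\ell$. (In the extreme case $t=k-1$ where each edge has only one outer vertex, the adjoined edges produce a complete bipartite graph $K_{r,k-1}$ in $\Ker^{(2)}_s$, giving $C_\ell$ for even $\ell$; for odd $\ell$ with $k=4$ one falls back on 3-kernels coming from $\{x_1,x_2,x_3\}\in\cJ_i$ and builds a $\cthreel$ in $\Ker^{(3)}_{k\ell}(\cF)$ whose $k$-expansion $\ckl$ lies in $\cC^{(k)}_\ell$.) The technical heart is verifying that the glued kernel graph contains the required cycle and that the slack $s\geq k\ell$ suffices to keep the chosen petals pairwise fresh in the greedy selection. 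Once type~2 is eliminated, $\cF_1:=\bigcup_i\cG_i$ is centralized with threshold~$s$ (each edge retains the central element assigned by its $\cG_i$), and together with $\cF_0$ this yields the desired partition.
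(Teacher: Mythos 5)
Your overall scaffolding mirrors the paper exactly: invoke the Partition Lemma~\ref{partition1} to write $\cF=\cF_0\cup\cG_1\cup\cdots\cup\cG_m$, classify each $\cJ_i$ as rank~$k$, rank~$k-1$ type~1, or rank~$k-1$ type~2, show the first two give a centralized family, and argue that type~2 is incompatible with the cycle-free hypothesis. The observations about rank~$k$ and type~1 being immediately centralized via property~(5) of Lemma~\ref{homogeneous} are correct (the paper instead rules out rank~$k$ by Lemma~\ref{cycle-corollary}, but your direct treatment works too).

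The genuine gap is in ruling out type~2, which is the only nontrivial part of the theorem. Your plan hinges on extracting a $C_\ell$ inside the outer $K_{k-t}$ on the coordinates $x_{t+1},\ldots,x_k$, or ``gluing'' several such cliques together, but this fails to engage with the actual range of parameters: the type~2 parameter $t$ can be as large as $k$, in which case $\{x_{t+1},\ldots,x_k\}$ is empty (or a singleton for $t=k-1$), so there is no outer clique at all. The paper's Lemma~\ref{kernel-clique} avoids this by choosing a $3$-set $S\subseteq[k]\setminus\{x_1,x_2\}$ that is allowed to mix inner indices $x_3,\dots,x_t$ with outer ones---this is exactly the role of the $|C|\leq t-2$ condition in Lemma~\ref{pattern-structure}(2)---and then builds the triangulated cycle $\TT_\ell^{(3)}$ inside the resulting $3$-uniform kernel family. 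You also explicitly leave the freshness/greedy bookkeeping as ``the technical heart,'' so this step is not a proof.

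The $k=4$ fallback is also incorrect as stated. For $k=4$ and type~2 with $t\in\{2,3\}$, the pattern $\cJ_i$ does \emph{not} contain the shadow of a linear $3$-cycle (e.g.\ for $t=2$ one has $\{x_1,x_2,x_3\},\{x_1,x_2,x_4\},\{x_3,x_4\}$, two of which meet in a $2$-set), so one cannot build $\cthreel$ in the kernel graph and hence cannot produce $\ckl$. This is precisely why the theorem's $k=4$ hypothesis forbids members of $\cC^{(4)}_\ell$ (minimal cycles) rather than the linear cycle $\ckl$, and why the paper's Propositions~\ref{4-kernel-cycle} and \ref{4cycle-corollary} aim only for a \emph{minimal} $\ell$-cycle, whose edges may intersect in two vertices. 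Your attempt to manufacture a linear cycle for $k=4$ is doomed in these cases. To fix the argument you should follow the paper: for $k\geq5$ use the $3$-set clique $2^S\subseteq\cJ_i$ and the triangulated cycle, and for $k=4$ prove the weaker statement that $\cG_i$ contains some member of $\cC^{(4)}_\ell$.
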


The proof consists of several small steps and is given at the end of this Section.

The following proposition follows immediately from Lemma \ref{homogeneous} and Lemma \ref{pattern-structure}.

\begin{proposition} \label{centralized}
If $\cF=\bigcup_{i=1}^m \cG_i$, where $\forall i\in [m]$, $\cG_i$  is a $(k,s)$-homogeneous family
 whose intersection pattern $\cJ_i$ has rank $k-1$ and is of type 1,
 then $\cF$ is centralized with threshold $s$.\qed
\end{proposition}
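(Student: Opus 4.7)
The plan is to define, for each $F \in \cF$, a central element $c(F)$ by exploiting the type 1 structure of whichever homogeneous piece $\cG_i$ contains $F$, and then verify the centralized property by translating membership in $\cJ_i$ into the desired kernel condition via item (5) of the $(k,s)$-homogeneity definition.

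Concretely, fix an edge $F \in \cF$ and choose $i$ so that $F \in \cG_i$. Let $(X_1,\ldots,X_k)$ be the $k$-partition witnessing that $\cG_i$ is $(k,s)$-homogeneous, and let $x_i \in [k]$ be the central element of $\cJ_i$ provided by Lemma~\ref{pattern-structure}(2). Since $F$ meets each $X_j$ in exactly one vertex, the intersection $F \cap X_{x_i}$ is a singleton; I take $c(F)$ to be its unique element. To check that this choice works, let $D \subsetneq F$ be an arbitrary proper subset with $c(F) \in D$, and set $I := \Pi(D) = \{j \in [k] : D \cap X_j \neq \emptyset\}$. The one-vertex-per-part property of $F$ forces $D = F[I]$, and we have $x_i \in I \subsetneq [k]$. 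By the type 1 characterization in Lemma~\ref{pattern-structure}(2), $\cJ_i$ contains every proper subset of $[k]$ that contains $x_i$, so $I \in \cJ_i$. Item (5) of the $(k,s)$-homogeneity of $\cG_i$ then gives $F[I] \in \Ker_s(\cG_i) \subseteq \Ker_s(\cF)$, i.e., $D \in \Ker_s(\cF)$, which is exactly what the definition of centralized demands.

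The proof is essentially a direct unpacking of the definitions, so I do not anticipate any genuine obstacle. The only point that merits a moment's care is the identification $D = F[I]$: it is valid precisely because $F$ contributes a single vertex to each part $X_j$, making the map $D \mapsto \Pi(D)$ a bijection between subsets of $F$ and subsets of $[k]$, and so transporting the set-level conclusion provided by homogeneity into the subset-level kernel statement required by the centralized definition.
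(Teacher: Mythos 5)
Your proof is correct and is exactly the direct unpacking that the paper intends when it says the proposition ``follows immediately'' from Lemma~\ref{homogeneous} and Lemma~\ref{pattern-structure}: choose $c(F)$ as the unique vertex of $F$ lying in the part indexed by the central element $x_i$ of the type-1 pattern $\cJ_i$, observe that the $k$-partite structure makes $D\mapsto \Pi(D)$ a bijection between subsets of $F$ and subsets of $[k]$, and then apply item~(5) of homogeneity together with $\cG_i\subseteq\cF$ to place $D=F[I]$ in $\Ker_s(\cF)$. No gaps, and no divergence from the paper's argument.
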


Recall that given a set $S$, $2^S$ denotes the collection of all subsets of $S$.

\begin{lemma} \label{kernel-clique}
Let $k\geq 5$ be an integer and let $\cL$ be a family of subsets of $[k]$ that is closed under
 intersection and has rank $k-1$ and is of type $2$.
 Then there exists $S\subseteq [n]$ such that $|S|=3$ and $2^S\subseteq \cL$.
\end{lemma}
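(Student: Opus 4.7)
The plan is to read the conclusion directly off the second alternative of Lemma~\ref{pattern-structure}. Since $\cL$ has rank $k-1$ and is of type~2, that lemma supplies a labeling $x_1,\dots,x_t,x_{t+1},\dots,x_k$ of $[k]$ with $t\geq 2$ together with the key closure property: $C\cup D\in\cL$ whenever $C\subseteq\{x_1,\dots,x_t\}$ satisfies $|C|\leq t-2$ and $D\subseteq\{x_{t+1},\dots,x_k\}$. (The reference to $[n]$ in the statement is clearly a typo for $[k]$.) Consequently, to produce a $3$-element $S\subseteq[k]$ with $2^S\subseteq\cL$, it is enough to choose $S$ so that $|S\cap\{x_1,\dots,x_t\}|\leq t-2$: then every $T\subseteq S$ writes as $C\cup D$ with $|C|\leq t-2$, so $T\in\cL$.

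The key step is therefore a short case split on how three elements can be distributed between the two blocks $\{x_1,\dots,x_t\}$ and $\{x_{t+1},\dots,x_k\}$, keeping as few as possible in the first block. When $k-t\geq 3$ I take $S$ entirely inside $\{x_{t+1},\dots,x_k\}$. When $k-t=2$, which forces $t\geq 3$, I take one element from the first block and two from the second. When $k-t=1$, which forces $t\geq 4$, I take two from the first block and one from the second. When $k=t$, so $t\geq 5$, I take three elements from $\{x_1,\dots,x_t\}$. In each branch the required elements exist, and the resulting $S$ satisfies $|S\cap\{x_1,\dots,x_t\}|\leq t-2$, so $2^S\subseteq\cL$.

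Beyond this bookkeeping there is essentially no obstacle. The hypothesis $k\geq 5$ enters exactly to make each branch non-vacuous: in the extreme $t=2$ it guarantees $k-t\geq 3$, and in the extreme $t=k$ it guarantees $t-2\geq 3$; for intermediate values of $t$ the two inequalities $3-a\leq k-t$ and $a\leq t-2$ can be met simultaneously for some $a\in\{0,1,2,3\}$ precisely because $k\geq 5$. Thus the only ``difficulty'' is recognising that the type~2 closure already forces a Boolean cube of dimension $3$ inside $\cL$, after which the argument is purely combinatorial bookkeeping.
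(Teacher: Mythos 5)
Your proof is correct and rests on the same key reduction as the paper: both observe that it suffices to find a $3$-set $S$ with $|S\cap\{x_1,\dots,x_t\}|\leq t-2$, whereupon the type-2 closure from Lemma~\ref{pattern-structure} gives $2^S\subseteq\cL$. The paper achieves this in one stroke, without any case split: it takes $S$ to be any $3$-subset of $[k]\setminus\{x_1,x_2\}$ (possible since $k\geq 5$, so $|[k]\setminus\{x_1,x_2\}|=k-2\geq 3$); then every $A\subseteq S$ has $A\cap\{x_1,\dots,x_t\}\subseteq\{x_3,\dots,x_t\}$, hence size at most $t-2$, regardless of where $t$ sits. Your four-way analysis by the value of $k-t$ reaches the same conclusion, and each branch checks out, but it reproves existence of a suitable $S$ that the uniform choice delivers immediately.
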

\begin{proof}
Let $S$ be a  $3$-subset of $[k]\setminus\{x_1,x_2\}$.
Any subset $A$ of $S$ can be written as $C\cup D$, where $C\subseteq \{x_1,\ldots, x_t\}$, $|C|\leq t-2$,
 and $D\subseteq \{x_{t+1},\ldots, x_k\}$.
By Lemma \ref{pattern-structure}, $A\in \cL$.
\end{proof}

For $k=4$, we prove something a bit weaker.

\begin{proposition} \label{4-kernel-cycle}
Let $\cL$ be a family of subsets of $[4]$ that is closed under intersection and has rank $3$
and is of type $2$. Then $\cL$ contains a minimal $3$-cycle where each edge has size $2$ or $3$.
\end{proposition}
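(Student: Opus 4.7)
\emph{Plan.} My plan is to apply Lemma~\ref{pattern-structure}(2) directly to obtain the structural description of $\cL$. Since $\cL$ is closed under intersection, has rank $3$, and is of type~2, there is a relabelling $x_1, x_2, x_3, x_4$ of $[4]$ and an integer $t$ with $2 \le t \le 4$ such that $[4] \setminus \{x_i\} \in \cL$ for each $t+1 \le i \le 4$, $[4] \setminus \{x_i, x_j\} \in \cL$ for all $1 \le i < j \le t$, and $C \cup D \in \cL$ whenever $C \subseteq \{x_1, \ldots, x_t\}$ with $|C| \le t-2$ and $D \subseteq \{x_{t+1}, \ldots, x_4\}$. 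I then split into the three cases $t = 2, 3, 4$ and in each exhibit three distinct sets $F_1, F_2, F_3 \in \cL$ of size $2$ or $3$ together with three distinct vertices $v_1, v_2, v_3 \in [4]$ satisfying $\{v_i, v_{i+1}\} \subseteq F_i$ for each $i$ (indices mod~$3$); this is exactly a minimal $3$-cycle with edges of size $2$ or $3$.

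For $t = 2$, I would take $F_1 = \{x_1, x_2, x_3\} = [4] \setminus \{x_4\}$, $F_2 = \{x_1, x_2, x_4\} = [4] \setminus \{x_3\}$, and $F_3 = \{x_3, x_4\} = [4] \setminus \{x_1, x_2\}$; all lie in $\cL$ by the first two conditions above, and the choice $v_1 = x_3$, $v_2 = x_1$, $v_3 = x_4$ works since $F_1 \cap F_3 = \{x_3\}$, $F_1 \cap F_2 \supseteq \{x_1\}$, and $F_2 \cap F_3 = \{x_4\}$. For $t = 3$, I would take $F_1 = [4] \setminus \{x_4\}$, $F_2 = [4] \setminus \{x_2, x_3\} = \{x_1, x_4\}$, and $F_3 = [4] \setminus \{x_1, x_2\} = \{x_3, x_4\}$, with the same vertex choice $(x_3,x_1,x_4)$. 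For $t = 4$, the third condition with $|C| \le t-2 = 2$ and $D = \emptyset$ puts every $2$-subset of $[4]$ into $\cL$, so one may take $F_1 = \{x_1, x_2\}$, $F_2 = \{x_2, x_3\}$, $F_3 = \{x_1, x_3\}$ with $v_i = x_i$.

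There is no substantive obstacle: the argument reduces to a small finite verification after invoking Lemma~\ref{pattern-structure}(2). The only point needing care is to make sure, especially in the $t = 2$ case, that the three pairwise intersections each supply a distinct cycle vertex; the explicit intersections computed above show this holds, so the three edges $F_1,F_2,F_3$ together with $v_1,v_2,v_3$ really form a minimal $3$-cycle in $\cL$ as required.
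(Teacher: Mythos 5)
Your proof is correct and follows essentially the same route as the paper's: invoke Lemma~\ref{pattern-structure}(2), split by $t\in\{2,3,4\}$, and exhibit the same three explicit sets in each case (the paper even lists identical triples for $t=2,3,4$). Your only addition is the explicit verification of the cycle-vertex choices, which the paper leaves to the reader.
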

\begin{proof}
By Lemma \ref{pattern-structure},
 $\forall C\subseteq \{x_1,\ldots, x_t\}$, where $|C|\leq t-2$, and  $\forall D\subseteq
\{x_{t+1},\ldots x_k\}$, we have $C\cup D\in \cL$.
If $t=4$, then we have $\{x_1,x_2\}$, $\{x_1,x_3\}$, $\{x_2,x_3\}\in \cL$.
If $t=3$, then we have $\{x_1,x_2,x_3\}$, $\{x_1,x_4\}$, $\{x_3, x_4\}\in \cL$.
If $t=2$, then we have $\{x_1,x_2, x_3\}$, $\{x_1, x_2, x_4\}$, $\{x_3, x_4\}\in \cL$.
\end{proof}

\begin{lemma} \label{cycle-corollary}
Let $k,\ell,s$ be positive integers, where $k\geq 5, \ell\geq 3$, and $s\geq k\ell$.
Let $\cG$ be a $(k,s)$-homogeneous family with a $k$-partition
$(X_1,\ldots, X_k)$ and intersection pattern $\cJ$ such that either $\cJ$ has
rank $k$ or has rank $k-1$ and is of type 2. Then $\ckl\subseteq \cG$.
\end{lemma}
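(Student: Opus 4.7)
The plan is to invoke Lemma~\ref{kernel-clique} to extract a $3$-subset $S\subseteq[k]$ with $2^{S}\subseteq\cJ$, use property~(5) of $(k,s)$-homogeneity to deduce strong kernel-degree bounds inside $\cG$, and then apply Proposition~\ref{expansion} after producing an ordinary graph cycle $C_{\ell}$ (a $2$-uniform linear $\ell$-cycle) inside the $2$-kernel graph $\Ker^{(2)}_{s}(\cG)$. Since the $k$-expansion of $C_{\ell}$ is exactly $\ckl$ and $s\geq k\ell=\ell\cdot k$ matches the threshold of Proposition~\ref{expansion} with $t=\ell$, the whole task reduces to the existence of such a $C_{\ell}$.

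\emph{Setting up the kernels.} Either the rank-$k$ hypothesis (combined with Lemma~\ref{pattern-structure}(1), which forces every proper subset of $[k]$ into $\cJ$) or the rank-$(k-1)$ type-$2$ hypothesis (combined with Lemma~\ref{kernel-clique}, which uses $k\geq 5$) provides a $3$-subset $S=\{a,b,c\}\subseteq[k]$ with $2^{S}\subseteq\cJ$. Write $V_{p}=\{F\cap X_{p}:F\in\cG\}$ and $K_{pq}=\{F[\{p,q\}]:F\in\cG\}$ for $\{p,q\}\subseteq S$, and set $K=K_{ab}\cup K_{bc}\cup K_{ac}$. Property~(5) of Lemma~\ref{homogeneous} puts each $K_{pq}$ into $\Ker^{(2)}_{s}(\cG)$; the singleton-kernel property (applied to $\{F\cap X_{p}\}$) then says every represented vertex of $K$ has at least $s$ neighbors in each of the other two parts of $K$, the pair-kernel property (applied to $F[\{p,q\}]$) says every edge of $K$ lies in at least $s$ triangles through the third part (each such triangle is $F[S]$ for some $F\in\cG$), and $\emptyset\in\cJ$ gives a matching of size $\geq s$ in $\cG$, hence $\geq s$ vertex-disjoint triangles in $K$.

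\emph{Producing $C_{\ell}$ in $K$.} Choose a starting vertex $v^{*}\in V_{a}$ and reserve its $K_{ab}$-neighborhood $N^{*}\subseteq V_{b}$ as the closing target (with $|N^{*}|\geq s$). For even $\ell$, run a greedy alternating walk inside $K_{ab}$: at each step the current vertex has $\geq s$ neighbors while at most $\ell-1<s$ are blocked by already-used vertices, so the walk extends; after a path of length $\ell-1$ we close to a cycle of length $\ell$ either by a direct edge to $v^{*}$ or, failing that, by using the triangle-richness of $K$ to reroute the last portion of the walk through $V_{c}$, eventually landing in $N^{*}$. For odd $\ell$ we first produce $C_{\ell-1}\subseteq K_{ab}$ as above, then replace one of its edges $\{u,v\}$ by a $2$-path $u$--$w$--$v$ through some fresh $w\in V_{c}$; such a $w$ exists and avoids the other cycle vertices because $\{u,v\}$ lies in at least $s$ triangles through $V_{c}$ and at most $\ell-3$ of them are forbidden. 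Finally, Proposition~\ref{expansion} applied to the resulting $C_{\ell}\subseteq\Ker^{(2)}_{s}(\cG)$ yields $\ckl\subseteq\cG$.

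The main obstacle is the closing step of the greedy walk. Large minimum degree alone does not force a bipartite graph to be bipancyclic, so we cannot close the walk at the prescribed length $\ell$ using the degree of $K_{ab}$ alone. The extra structural input we must exploit is that each edge of $K_{ab}$ is witnessed by at least $s$ triangles through $V_{c}$: this triangle-richness, which follows from the hypothesis $2^{S}\subseteq\cJ$, provides the detours through $V_{c}$ needed to redirect the walk into $N^{*}$ at precisely the required length.
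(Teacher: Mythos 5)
Your reduction is sound: since the $2$-uniform linear $\ell$-cycle is the graph cycle $C_\ell$ and its $k$-expansion is $\ckl$, it suffices to exhibit $C_\ell$ inside $\Ker_s(\cG)$ with $s\geq k\ell$ and then invoke Proposition~\ref{expansion}. The preliminary facts you extract (a $3$-set $S=\{a,b,c\}$ with $2^S\subseteq\cJ$, every edge of $K$ being a kernel, every kernel edge lying in $\geq s$ triangles of $K$ through the third part) are all correct consequences of Lemmas~\ref{pattern-structure}, \ref{kernel-clique} and property~(5) of homogeneity. This is a genuinely different route from the paper's, which works $3$-uniformly: it forms $\cH=\{F\cap(\cup_{i\in A}X_i):F\in\cG,\,A\subseteq S\}\subseteq\Ker_s(\cG)$, shows $\deg^*_\cH(D)\geq s$ for every $D\in\partial_2(\cH)$, greedily embeds the triangulated cycle $\TT^{(3)}_\ell$ into $\cH$ (which works because $\TT^{(3)}_\ell$ admits an edge-ordering in which each new triple meets the previous ones in a $2$-set, so there is never a ``closing'' step), and then expands, since $\cthreel\subseteq\TT^{(3)}_\ell$.

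The genuine gap is precisely the one you flag and then wave at: your greedy alternating walk in $K_{ab}$ cannot be closed. After a path of length $\ell-1$ ending at $v_{\ell-1}$, you need $v_{\ell-1}\in N(v^*)$, and minimum degree $\geq s$ in a bipartite graph gives no control over $N(v^*)\cap N(v_{\ell-2})$; the two neighborhoods can be disjoint. The proposed reroute (``replace the last portion of the walk by detours through $V_c$'') is not a proof: a single detour through $V_c$ changes the length parity and still leaves you needing a common neighbor of two vertices in the same part, which the triangle-richness (a statement about \emph{edges}, not arbitrary pairs) does not supply. The gap is fillable in the spirit of your observation, but by a different construction than a greedy walk. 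For instance: any $F\in\cG$ gives a triangle $F[S]=C_3\subseteq K$ all of whose edges are kernels; and since \emph{every} edge $\{u,v\}$ of $K$ (in any of $K_{ab},K_{bc},K_{ac}$) is a kernel and hence has $\geq s>\ell$ common $K$-neighbors in the third part, one can iteratively subdivide a single edge of the current cycle by a fresh vertex of the third part, growing $C_3\to C_4\to\cdots\to C_\ell$ while keeping every edge a kernel. That is a complete argument; your ``close the walk by rerouting'' is not. The paper's choice of $\TT^{(3)}_\ell$ is precisely what lets it avoid facing this closing problem at all.
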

\begin{proof}
By Lemma \ref{pattern-structure} and Lemma \ref{kernel-clique},
 there exists a $3$-set $S\subseteq [k]$ such that $2^S\subseteq \cJ$.
By definition, this means that $\forall F\in \cG , \forall A\subseteq S$,
 $F\cap (\cup_{i\in A} X_i)$ is  a member of $\Ker_s(\cG)$.
Let $\cH=\{F\cap (\cup_{i\in A} X_i): F\in \cG, A\subseteq S\}$.
Then $\cH\subseteq \Ker_s(\cG)$. Note that $\cH$ is down-closed.
Let $D\in \partial_2(\cH)$. Then $D\in \Ker_s(\cF)$.
So $\cF$ contains an $s$-star $\cL$ with kernel $D$.
The restriction of the $s$ petals of $\cL$ on $\bigcup_{i\in S} X_i$ are $s$ distinct
 triples in $\cH$ containing $D$. So $\deg^*_\cH(D)=\deg_\cH(D)\geq s$ for all $D\in \partial_2(\cH)$.
 This allows us to embed the triangulated cycle $\TT_\ell^{(3)}$ into $\cH$
  as we did in the proof of Proposition~\ref{forest-upper_new3}.
Since $\cthreel\subseteq \TT_\ell^{(3)}$ and $\cH\subseteq \Ker_s(\cG)$
 Proposition \ref{expansion} implies that
$\cG$ contains a $k$-expansion of $\cthreel$, which is $\ckl$.
\end{proof}

For $k=4$, Lemma \ref{4-kernel-cycle} and induction yield

\begin{proposition} \label{4cycle-corollary}
Let $\ell,s$ be positive integers, where $\ell \geq 3$ and $s\geq 4\ell$.
Let $\cG$ be a $(4,s)$-homogeneous family with a $4$-partition $(X_1,\ldots, X_4)$
 intersection pattern $\cJ$ such that either $\cJ$ has
rank $4$ or has rank $3$ and is of type 2.
Then $\cG$ contains a member of ${\cal C}^{(4)}_\ell$.
\end{proposition}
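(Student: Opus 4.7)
The plan is to split on the two hypotheses for $\cJ$ and use induction on $\ell \geq 3$ in the harder case. If $\cJ$ has rank $4$, then $\cJ$ contains all proper subsets of $[4]$; in particular $2^S \subseteq \cJ$ for any $3$-set $S \subseteq [4]$. The argument of Lemma~\ref{cycle-corollary} then goes through verbatim: using $2^S \subseteq \cJ$, the $3$-shadow on $\bigcup_{i \in S} X_i$ has all pairs of kernel degree $\geq s$, so one embeds the triangulated cycle $\TT_\ell^{(3)}$ into $\Ker_s(\cG)$ and applies Proposition~\ref{expansion} to obtain a $4$-expansion in $\cG$ containing a linear (hence minimal) $\ell$-cycle. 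So the real work is the rank-$3$ type-$2$ case, which I would handle by induction on $\ell$.

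For the base case $\ell=3$, Proposition~\ref{4-kernel-cycle} supplies three sets $E_1, E_2, E_3 \in \cJ$ of sizes $2$ or $3$ forming a minimal $3$-cycle in $[4]$. Fix any $F \in \cG$; then $\cH := \{F[E_1], F[E_2], F[E_3]\}$ lies in $\Ker_s(\cG)$ by item~(5) of Lemma~\ref{homogeneous}, and its three members, being restrictions of the common $4$-set $F$, directly inherit the minimal $3$-cycle structure from $\{E_1, E_2, E_3\}$. Since $|\cH|=3$ and $s \geq 4\ell = 12 = 4|\cH|$, Proposition~\ref{expansion} embeds $\cH^{(4)} \subseteq \cG$, and as noted in Section~\ref{s:4}, the $4$-expansion of a minimal $3$-cycle with edges of sizes at least $2$ is a $4$-uniform minimal $3$-cycle, i.e.\ a member of $\cC^{(4)}_3$.

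For the inductive step $\ell \geq 4$, I would assume the existence of a minimal $(\ell-1)$-cycle $F_1, F_2, \ldots, F_{\ell-1} \subseteq \cG$ and construct a minimal $\ell$-cycle by inserting one new edge $F^*$ between $F_1$ and $F_2$, simultaneously replacing $F_1$ by an adjusted petal $F_1'$ so that $F_1', F^*, F_2, F_3, \ldots, F_{\ell-1}$ becomes the desired minimal cycle. The three cyclically-compatible connector patterns $E_1, E_2, E_3 \in \cJ$ from Proposition~\ref{4-kernel-cycle} drive the construction: pick $F^*$ as a petal of the $s$-star on kernel $F_2[E_j]$ for some $E_j$, and pick $F_1'$ as a petal of the $s$-star on kernel $F^*[E_{j'}]$ for $E_{j'}$ adjacent to $E_j$ in the $3$-cycle. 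Each petal exists by~\eqref{eq:1.2}, since $s \geq 4\ell$ exceeds the $\leq 4(\ell-1)$ vertices used by the existing cycle.

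The main obstacle will be coordinating these petal choices so that the final cycle is minimal: we need $F_1' \cap F_{\ell-1} \neq \emptyset$ (to close the cycle) while $F_1' \cap F_2 = \emptyset$ (since $F_1'$ and $F_2$ become non-consecutive after the insertion), and $F^*$ must be disjoint from $F_3, \ldots, F_{\ell-1}$. The small pivots $E_j \cap E_{j'}$ inside $[4]$ supplied by Proposition~\ref{4-kernel-cycle} in each sub-case $t \in \{2,3,4\}$, combined with the abundance of petals ($s \geq 4\ell$ against the $\leq 4$ vertices of $F_{\ell-1}$), allow such coordinated choices, although the execution requires case analysis on $t$ to track the concrete connector patterns produced by Proposition~\ref{4-kernel-cycle} in each sub-case.
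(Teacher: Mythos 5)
Your handling of the rank-$4$ case and the $\ell=3$ base case both work. In the rank-$4$ case the argument of Lemma~\ref{cycle-corollary} indeed goes through without invoking Lemma~\ref{kernel-clique} (which needs $k\geq 5$), since Lemma~\ref{pattern-structure}(1) already supplies a $3$-set $S$ with $2^S\subseteq\cJ$. Your base case, using Proposition~\ref{expansion} on $\cH=\{F[E_1],F[E_2],F[E_3]\}$, also yields a valid minimal $3$-cycle (one must just observe that Section~\ref{s:4} only asserts the expansion claim for \emph{uniform} cycles, but the verification for a non-uniform minimal $3$-cycle with edge sizes $2$ or $3$ is immediate).

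The inductive step, however, has a real gap. Your scheme --- take $F^*$ as a fresh petal of the $s$-star on kernel $F_2[E_j]$, then $F_1'$ as a fresh petal of the $s$-star on kernel $F^*[E_{j'}]$ --- cannot close the cycle. Since $F^*$ is chosen by~\eqref{eq:1.2} to meet the old configuration only in $F_2[E_j]\subseteq F_2$, and $F_2\cap F_{\ell-1}=\emptyset$ (they are non-consecutive on the old cycle when $\ell-1\geq 4$), we get $F^*\cap F_{\ell-1}=\emptyset$, hence $F^*[E_{j'}]\cap F_{\ell-1}=\emptyset$. Then $F_1'$, a fresh petal on $F^*[E_{j'}]$ chosen away from the old vertex set, satisfies $F_1'\cap F_{\ell-1}=\emptyset$. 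So the edge $F_1'F_{\ell-1}$ that must close your cycle is forced to be missing --- this is not a detail to be sorted out by ``case analysis on $t$,'' but a structural obstruction to the insertion strategy itself. The paper avoids this by a different replacement scheme: pick an edge $E$ of the existing cycle, keep both its neighbors $E', E''$ fixed, and replace $E$ with \emph{two} edges obtained from $E$ by swapping out a single vertex (one swap to disconnect from $E''$, one to disconnect from $E'$); each replacement retains most of $E$ and hence its intersection with the relevant neighbor. Crucially, the paper also carries a ``good member'' invariant through the induction --- every consecutive intersection is either a single vertex in $X_3$ or $X_4$, or a pair in $X_1\times X_2$ --- which is exactly what guarantees that $E\setminus\{b_3\}$ and $E\setminus\{b_4\}$ are of the form $E[I_j]$ and hence lie in $\Ker_s(\cG)$. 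Your proposal carries no such invariant, so even if you switched to the replace-one-edge-by-two scheme, you would not know which subsets of $E$ are kernels and could not produce the swapped edges.
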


\begin{proof}
By Lemma \ref{4-kernel-cycle} $\cJ$ contains a minimal $3$-cycle $L$.
Consider first the case where the edges of $L$ are $I_1=\{1,2,3\}, I_2=\{1,2,4\}$ and $I_3=\{3,4\}$.
Then $\forall F\in \cG, F[I_1], F[I_2], F[I_3]\in \Ker_s(\cG)$.
We use induction on $\ell$ to show that $\cG$ contains a member of ${\cal C}^{(4)}_\ell$ such
that for any two consecutive edges $E,E'$ on the cycle, either they intersect in exactly
one vertex and that vertex lies in $X_3$ or $X_4$ or they intersect in two vertices and
those two vertices lie in $X_1$ and $X_2$, respectively; we call such a member of ${\cal C}^{(4)}_\ell$
 a {\it good} member.

For the basis step let $\ell=3$.
Let $E_0=\{a_1,a_2,a_3,a_4\}$ be any edge in $\cG$,
 where $\forall i\in [4], a_i\in X_i$. By our assumption, $\{a_1,a_2,a_3\}, \{a_1,a_2,a_4\}, \{a_3,a_4\}$
 all have kernel degree at least $s\geq 4\ell$.
So we can find $E_1=\{a_1,a_2,a_3, a'_4\}, E_2=\{a_1,a_2,a'_3,a_4\}, E_3=\{a'_1,a'_2, a_3,a_4\}\in \cG$,
 where $\forall i\in [4], a'_i\in X_i$ and $a_1,\ldots, a_4, a'_1,\ldots, a'_4$ are all distinct.
 Now, $E_1,E_2,E_3$ form
a minimal $3$-cycle that satisfies the claim. For the induction step, suppose $\ell\geq 4$ and
that the claim holds for $\ell-1$ and that $\cG$ contains a good member $L$ of ${\cal C}^{(4)}_{\ell-1}$.
Let $E$ be an edge of $L$. Let $E',E''$ be the edge preceding $E$
and succeeding $E$, respectively on $L$. Then $\{|E\cap E'|, |E\cap E''|\}= \{1,1\}$ or $\{1,2\}$.
In the former case, we may assume $E\cap E'=\{b_3\}\subseteq X_3$ and $E\cap E''=\{b_4\}\subseteq X_4$.
By our assumption $E\setminus \{b_4\}=E[I_1]\in \Ker_s(\cG)$
and $E\setminus \{b_3\}=E[I_2]\in \Ker_s(\cG)$. Since $s\geq 4\ell$ and $n(L)\leq 4\ell-1$,
we can find $b'_3\in X_3\setminus V(L), b'_4\in X_4\setminus V(L)$ such that
$(E\setminus \{b_3\})\cup\{b'_3\}\in\cG$ and  $(E\setminus \{b_4\})\cup\{b'_4\}\in\cF$.
Replacing $E$ with these two members of $\cG$ in $L$ yields a good member of
${\cal C}^{(4)}_\ell$. The case where $\{|E\cap E'|, |E\cap E''|\}=\{1,2\}$ can be handled similarly.
This completes the induction.

Similar arguments apply if $\cJ$ contains other kinds of minimal $3$-cycles.
\end{proof}

\noindent{\it Proof of Theorem~\ref{partition2}.}\enskip
Consider the partition $\cF= \cG_1\cup \dots\cup \cG_{m}\cup \cF_0$
 given by the Partition Lemma~\ref{partition1}.
For each $i\in [m]$,  $\cJ_i$  has rank at least $k-1$.
If some $\cJ_i$ has rank $k$ or has rank $k-1$ and is of type $2$, then
 $\ckl\subseteq \cG_i\subseteq \cF$  by Lemma~\ref{cycle-corollary}, a contradiction
  in the case of $k\geq 5$.
So each $\cJ_i$ has rank $k-1$ and is of type $1$.
By Proposition \ref{centralized}, $\cF_1:= \bigcup_{i=1}^m \cG_i$
is centralized with threshold $s$.

For $k=4$ we use Proposition \ref{4cycle-corollary}  in place of Lemma~\ref{cycle-corollary}.
\qed

\section{The kernel structure of centralized families}\label{s:9}

\begin{theorem} \label{L-set}
Let $k,\ell$ be integers, where $k\geq 4$ and $\ell\geq 3$.
Let $t=\fl{\frac{\ell-1}{2}}$. Let $s=k\ell$.
For all $n\geq n_1(k,\ell)$
the following holds:  If $\cF\subseteq \binom{[n]}{k}$
is a centralized family with threshold $s$, $\ckl\not\subseteq \cF$, and
$|\cF|\geq t\binom{n}{k-1}-o(n^{k-1})$,
then there exist $S,T\subseteq [n]$, where $S\cap T=\emptyset$, $|S|=t$ and
$|T|\geq n-o(n)$, such that $\Ker^{(2)}_s(\cF)$ contains all the edges between $S$
and $T$.
\end{theorem}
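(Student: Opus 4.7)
The plan centers on the auxiliary graph $G := \Ker_s^{(2)}(\cF)$ on $[n]$, whose edges are the $2$-kernels of $\cF$. Since the $k$-expansion of the $2$-uniform cycle $C_\ell$ is $\ckl$ and $s = k\ell$, Proposition \ref{expansion} ensures $G$ contains no $C_\ell$. For each vertex $v$, set $\cF_v := \{F \in \cF : c(F) = v\}$ and $a_v := |\cF_v|$; the sets $\cF_v$ partition $\cF$, so $|\cF| = \sum_v a_v$. By centralization, whenever $F \in \cF_v$ and $u \in F \setminus \{v\}$, the pair $\{v, u\}$ lies in $G$; hence $F \setminus \{v\} \subseteq N_G(v)$ and $a_v \leq \binom{d_G(v)}{k-1}$. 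The critical observation is a \emph{universality criterion}: if $a_v > \binom{n-2}{k-1}$, then for every $u \neq v$ at most $\binom{n-2}{k-1}$ edges of $\cF_v$ can avoid $u$, so some $F \in \cF_v$ contains $u$; centralization then forces $\{v, u\} \in G$, so $v$ is adjacent in $G$ to every other vertex.

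Define $S := \{v : a_v > \binom{n-2}{k-1}\}$, the set of universal vertices; the goal is to show $|S| = t$. The upper bound $|S| \leq t$ is argued by contradiction: if $h_1, \ldots, h_{t+1} \in S$, each $h_i$ is universal, so $\{h_1, \ldots, h_{t+1}\}$ induces $K_{t+1}$ in $G$ and each $h_i$ is adjacent to every vertex of $[n] \setminus S$. If $\ell = 2t+2$, pick distinct $u_1, \ldots, u_{t+1} \in [n] \setminus S$ and form the cycle $h_1 u_1 h_2 u_2 \cdots h_{t+1} u_{t+1} h_1$ of length $2(t+1) = \ell$ in $G$. If $\ell = 2t+1$, pick distinct $u_1, \ldots, u_t \in [n] \setminus S$ and form the cycle $h_1 h_2 u_1 h_3 u_2 \cdots h_{t+1} u_t h_1$ of length $2t+1 = \ell$ in $G$, using the internal $G$-edge $\{h_1, h_2\}$. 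Either way $G$ contains $C_\ell$, contradicting $C_\ell$-freeness.

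The lower bound $|S| \geq t$ is the main obstacle. The intended approach is to work with $\Ker_s^{(3)}(\cF)$, which is $\cthreel$-free by Proposition \ref{expansion} and hence has at most $(2\ell-3)\binom{n}{2}$ edges by Proposition \ref{forest-upper_new3}. Setting $T_v := \{F \setminus \{v\} : F \in \cF_v\}$ and noting that each triple in $\Ker_s^{(3)}(\cF)$ belongs to $\{\{v\} \cup D : D \in \partial_2(T_v)\}$ for at most three choices of $v$, the Kruskal--Katona inequality (\ref{eq:KK1.1}) applied with $a_v = \binom{x_v}{k-1}$ yields $\sum_v \binom{x_v}{2} \leq 3(2\ell-3)\binom{n}{2}$. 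Combining this quadratic constraint with the $(k-1)$-st moment lower bound coming from $|\cF| = \sum_v \binom{x_v}{k-1} \geq t\binom{n-1}{k-1} - o(n^{k-1})$ via an extremal/convexity analysis (pushing mass to the endpoint $x_v = n$, then iteratively refining among the resulting constant-sized set of large-$x_v$ vertices using the same quadratic bound applied to the residual family; this is the technically delicate part) forces at least $t$ vertices $v$ to satisfy $x_v > n - O(1)$, equivalently $a_v > \binom{n-2}{k-1}$.

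With $|S| = t$ settled, the conclusion follows at once: set $T := [n] \setminus S$, so that $|T| = n - t \geq n - o(n)$ and $S \cap T = \emptyset$, and by universality every pair $\{s, u\}$ with $s \in S$ and $u \in T$ lies in $G = \Ker_s^{(2)}(\cF)$, as required.
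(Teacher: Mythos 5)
Your outline gets the setup right — working with $G := \Ker_s^{(2)}(\cF)$, noting $G$ is $C_\ell$-free via Proposition~\ref{expansion}, partitioning $\cF$ by the central element, bounding $\sum_v|\partial_2(\cA'_v)|$ through $|\Ker_s^{(3)}(\cF)|$, and the clean cycle-building argument that at most $t$ vertices can be universal in $G$. The fatal problem is the lower bound $|S|\geq t$, which is exactly the part you flag as ``the technically delicate part'' and then do not prove. It is not merely unproven: as stated it is false, and the intended moment argument cannot be repaired without a substantial new idea.

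First, the threshold $a_v>\binom{n-2}{k-1}$ is too demanding. Take the extremal family $\cF_S$ ($S=\{1,\dots,t\}$, $t\geq 3$) with the centralization $c(F)=\min(F\cap S)$ — a perfectly valid choice. Then $a_1=\binom{n-1}{k-1}$, $a_2=\binom{n-2}{k-1}$, $a_i=\binom{n-i}{k-1}$ for $i\leq t$, and $a_v=0$ otherwise. Only $v=1$ passes your test, so your $S$ has size $1$, not $t$, even though $\cF_S$ satisfies every hypothesis of the theorem. So the claim ``$|S|=t$'' for $S$ as you define it is simply wrong; you would at minimum have to replace $S$ by the actual set of high-degree (or universal) vertices of $G$ and argue about that instead.

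Second, and independently, the moment/convexity argument as sketched cannot deliver $t$ vertices with $x_v$ near $n$. Your two constraints are $\sum_v\binom{x_v}{2}\leq 3(2\ell-3)\binom{n}{2}$ and $\sum_v\binom{x_v}{k-1}\geq t\binom{n}{k-1}-o(n^{k-1})$, with $x_v\leq n-1$. These are consistent with a profile in which $p\approx t/\gamma^{k-1}$ vertices each have $x_v=\gamma n$ for $\gamma^{k-3}\approx t/\big(3(2\ell-3)\big)\approx 1/12$, so for $k=4$ or $5$ the extremal profile under these two constraints alone has every $x_v$ bounded away from $n$ by a constant factor. The culprit is the constant $3(2\ell-3)$ in the quadratic bound: convexity only forces a near-$n$ value when that constant is essentially $1$. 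The paper overcomes this with genuinely different machinery: it first confines the set of central elements to a small set $L$ of size $O(n^{1/(2(t+1))})$ (using Lemma~\ref{binomial-sum}), then regroups $\cF$ by the $t$-subsets $A\subseteq L$ of central elements of ``parallel'' edges sharing the same tail $F\setminus L$ (discarding $|A|\geq t+1$ by extending a cycle), and — the decisive step — proves Claim~1: the $2$-shadows $\partial_2(\cF'_A)$ for distinct $t$-sets $A$ are pairwise disjoint, because two overlapping ones would produce both a $(2t+1)$- and a $(2t+2)$-linear-cycle in $\Ker_s(\cF)$. Disjointness gives the tight quadratic constraint $\sum_A\binom{y_A}{2}\leq\binom{n}{2}$ (constant $1$), and only then does the convexity step $\binom{y_i}{k-1}\leq\binom{y_1}{k-1}\binom{y_i}{2}/\binom{y_1}{2}$ force $y_1\geq n-o(n)$. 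Your sketch has no analogue of this disjointness structure, and without it the convexity argument collapses.
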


\begin{lemma} \label{binomial-sum}
Let $n,p,q$ be positive integers and $x_1,\ldots, x_n$  reals
such that $q\geq p$ and  $x_1\geq \ldots \geq x_n\geq q-1$.
Let $M=\sum_{i=1}^n \binom{x}{p}$. Then $\forall h\in [n]$, we have
$$\sum_{i=h}^n \binom{x_i}{q} \leq p^{q-p} \frac{M^{\frac{q}{p}}}{h^{\frac{q}{p}-1}}.$$
\end{lemma}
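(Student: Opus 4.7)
The strategy is to reduce the statement to a pointwise comparison followed by a simple aggregation. Specifically, I would first establish the pointwise inequality
$$\binom{x}{q} \leq p^{q-p}\binom{x}{p}^{q/p} \qquad \text{for every real } x \geq q-1,$$
and then sum this over $i \geq h$, exploiting the fact that $y_i := \binom{x_i}{p}$ is a decreasing sequence with total sum $M$.

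For the pointwise step, if $q=p$ the inequality is trivial, so I may assume $q>p$, in which case $x \geq q-1 \geq p$. Starting from the identity $\binom{x}{q} = \binom{x}{p}\binom{x-p}{q-p}/\binom{q}{p}$, the assumption $x \geq q-1$ makes every factor of the product $(x-p)(x-p-1)\cdots(x-q+1)$ nonnegative, so $\binom{x-p}{q-p} \leq x^{q-p}/(q-p)!$. Meanwhile, the elementary inequality $\binom{x}{p} \geq (x/p)^p$ holds for real $x \geq p$ (equality at $x=p$, and $\prod_{j=0}^{p-1}(1-j/x)$ is increasing in $x$), and this rearranges to $x \leq p\binom{x}{p}^{1/p}$, hence $x^{q-p} \leq p^{q-p}\binom{x}{p}^{(q-p)/p}$. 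Putting these together with $\binom{q}{p} \geq 1$ and $(q-p)! \geq 1$ yields the pointwise bound.

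For the aggregation step, since $y_1 \geq \cdots \geq y_n \geq 0$ we have $h\,y_h \leq y_1 + \cdots + y_h \leq M$, hence $y_i \leq M/h$ for every $i \geq h$. Therefore
$$\sum_{i=h}^n y_i^{q/p} = \sum_{i=h}^n y_i \cdot y_i^{q/p-1} \leq \left(\frac{M}{h}\right)^{q/p - 1}\sum_{i=h}^n y_i \leq M\left(\frac{M}{h}\right)^{q/p-1} = \frac{M^{q/p}}{h^{q/p-1}},$$
and combining this with the pointwise bound gives the lemma. The only mildly delicate point is ensuring the pointwise inequality remains valid at the boundary, where $x_i$ is close to $q-1$: there $\binom{x_i}{q}$ is small or zero while $\binom{x_i}{p}$ is bounded away from zero, so the inequality easily holds. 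The constant $p^{q-p}$ is in fact not sharp (a careful accounting improves it by a factor of $(q-p)!\binom{q}{p}$), but the stated form is enough for the intended applications.
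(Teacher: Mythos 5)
Your proof is correct and is essentially the same argument as the paper's, just organized into a pointwise inequality followed by a summation rather than factoring $x_h^{q-p}$ out of the sum directly. The paper bounds $\sum_{i\geq h}\binom{x_i}{q}\leq x_h^{q-p}\sum_{i\geq h}\binom{x_i}{p}\leq x_h^{q-p}M$ and then controls $x_h$ via $\binom{x_h}{p}\leq M/h$ and $\binom{x}{p}\geq(x/p)^p$; your pointwise bound $\binom{x}{q}\leq p^{q-p}\binom{x}{p}^{q/p}$ packages exactly those two inequalities together before summing, so the ingredients and the final constant are identical. Your extra use of the identity $\binom{x}{q}=\binom{x}{p}\binom{x-p}{q-p}/\binom{q}{p}$ is slightly more than needed (the paper just uses $\binom{x}{q}\leq x^{q-p}\binom{x}{p}$), but it correctly shows the constant $p^{q-p}$ is not tight.
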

\begin{proof}
Since $\sum_{i=1}^n \binom{x_i}{p}=M$ and $\binom{x_1}{p}\geq \binom{x_2}{p}\geq \ldots \geq \binom{x_n}{p}$,
 we have
$\binom{x_h}{p}\leq \frac{M}{h}$. Since $\binom{x_h}{p}\geq(\frac{x_h}{p})^p$, this yields
$x_h< p (\frac{M}{h})^\frac{1}{p}$.
Also, trivially for $i\geq h$, $\binom{x_i}{q}\leq x_i^{q-p}\binom{x_i}{p} \leq x_h^{q-p}\binom{x_i}{p}$.
Hence,
$$\sum_{i=h}^n\binom{x_i}{q}\leq (x_h)^{q-p}\sum_{i=h}^n \binom{x_i}{p}
   =x_h^{q-p} M< M\left [p \left(\frac{M}{h}\right)^\frac{1}{p}\right]^{q-p}=
  p^{q-p}\frac{M^\frac{q}{p}}{h^{\frac{q}{p}-1}}.$$
\end{proof}

\noindent{\it Proof of Theorem~\ref{L-set}.}\enskip
Let us partition $\cF$ according to $c(F)$.
For each $i\in [n]$,
let $$\cA_i=\{F\in \cF: c(F)=i\}, \quad \mbox{ and } \quad \cA'_i=\cF_i - \{i\}.$$
Let $D \in \partial_2(\cA'_i)$.
Then $D\cup \{i\}$ is a proper $3$-subset of $F$ containing $i=c(F)$.
Since $\cF$ is centralized with threshold $s$,   $D\cup \{i\}\in \Ker^{(3)}_s(\cF)$.
Thus $\forall D\in \partial_2(\cA'_i)$, we have $D\cup \{i\}\in \Ker^{(3)}_s(\cF)$.
This yields
\begin{equation} \label{kb-lower}
|\Ker^{(3)}_s(\cF)|\geq \frac{1}{3}\sum_{i=1}^n |\partial_2(\cA'_i)|.
\end{equation}

Since $\ckl\not\subseteq \cF$ and $s=k\ell$, by Proposition~\ref{expansion},
$\cthreel\not\subseteq \Ker^{(3)}_s(\cF)$.
By Proposition~\ref{forest-upper_new3}, we have
\begin{equation} \label{cycle-upper}
|\Ker^{(3)}_s(\cF)|\leq \ex_3(n,\cthreel) < 2\ell\binom{n}{2}.
\end{equation}
By \eqref{kb-lower} and \eqref{cycle-upper}, we have
\begin{equation}
\sum_{i=1}^n |\partial_2(\cA'_i)|\leq  6 \ell \binom{n}{2}<3\ell n^2 .
\end{equation}
For each $i\in [n]$, let $x_i\geq 1$ be the real such that
$|\partial_2(\cA'_i)|=\binom{x_i}{2}$, where without loss of generality
we may assume that $x_1\geq \ldots\geq x_n$.
Let $M=\sum_{i=1}^n \binom{x_i}{2}$. Then $M<3\ell n^2$.
By Kruskal-Katona's theorem~\eqref{eq:KK1.1}, $\forall i\in [n]$, $|\cA'_i|\leq \binom{x_i}{k-1}$.
Now, set $\e=\frac{1}{2(t+1)}$ and $h=\ce{n^\e}$.
Applying Lemma~\ref{binomial-sum} with $p=2, q=k-1$, we have
\begin{equation} \label{tail}
\sum_{i\geq h} |\cA_i|=\sum_{i=h}^n |\cA'_i|=\sum_{i=h}^n \binom{x_i}{k-1}
\leq 2^{k-3}\frac{M^{\frac{k-1}{2}}}{h^{\frac{k-3}{2}}}=O(n^{k-1-\frac{k-3}{2}\e})
=O(n^{k-1-\frac{k-3}{4(t+1)}}).
\end{equation}

Let $L=[h]$.  Let $\cF_1=\{F\in \cF: c(F)\notin L\}$.  Then $\cF_1\subseteq
\bigcup_{i> h} \cA_i$. By \eqref{tail}, we have
\begin{equation}\label{F1-upper}
|\cF_1|=O(n^{k-1-\frac{k-3}{4(t+1)}}).
\end{equation}
By our definition, $\forall F\in \cF\setminus \cF_1$ we have $c(F)\in L$.
Let $\cF_2=\{F: F\in \cF\setminus \cF_1, |F\cap L|\geq 2\}$. Then
\begin{equation} \label{F2-upper}
|\cF_2|\leq \binom{|L|}{2}\binom{n-|L|}{k-2}\leq n^{k-2+2\e}<n^{k-\frac{3}{2}}.
\end{equation}
Let $\cF'=\cF\setminus (\cF_1\cup \cF_2)$. Then $\forall F\in \cF'$, we have
$F\cap L=\{c(F)\}$.
For each $A\subseteq L$, let
$$\cF_A=\{F\in \cF': \forall a\in A, (F\setminus L)\cup \{a\}\in  \cF'\},
\quad \mbox{and}
\quad   \cF'_A=\{F\setminus L: F\in \cF_A\}.$$
Then $\cF'=\bigcup_{A\subseteq L} \cF_A$.

A set $W$ of vertices in a hypergraph $\cG$ is {\it strongly independent} if
no two vertices of $W$ lie in the same edge of $\cG$.
The cycle $\ckl$ has a strongly independent set $W$ of $t+1=1+\fl{(\ell-1)/2}
=\ce{\ell/2}$ vertices
whose removal leaves a $(k-1)$-uniform hypergraph $\cT$ with $\ell$ edges.
It is easy to see that $\cT\subseteq {\mathbb C}_{\lceil 3\ell/2\rceil}^{(k-1)}$.
By Corollary~\ref{cycle-bound} we obtain
\begin{equation}\label{upper2}
  \ex_{k-1}(n, \cT)\leq \ex_{k-1}(n,{\mathbb C}_{\lceil 3\ell/2\rceil}^{(k-1)})
      < 2k\ell\binom{n}{k-2}.
   \end{equation}
Note that one can easily get a sharper bound on $\ex_{k-1}(n,\cT)$ than \eqref{upper2}.
But \eqref{upper2} suffices for our purposes.
Suppose there exists $A\subseteq L$, where $|A|\geq t+1$, such that $\cF'_A$
 contains a copy $\cT'$ of $\cT$.
Then since each edge of $\cT'$ together with each $a\in A$ forms an edge of $\cF$,
 we can extend $\cT'$ to a copy of $\ckl$ in $\cF$,
 contradicting $\ckl\not\subseteq \cF$.
So, $\forall A\subseteq L, |A|\geq t+1$,
 we have $\cT \not \subseteq \cF_A$ and by \eqref{upper2}
$|\cF_A|\leq 2k\ell \binom{n-|L|}{k-2}$.
Let $\cF_3=\bigcup_{A\subseteq L, |A|\geq t+1} \cF_A$. By our discussion above, we
have
\begin{equation} \label{F3-upper}
|\cF_3|\leq \binom{|L|}{t+1} 2k\ell \binom{n-|L|}{k-2}<2k\ell n^{k-2+\e(t+1)}
=O(n^{k-\frac{3}{2}}).
\end{equation}

Let $\cF^*=\bigcup_{A\subseteq L, |A|\leq t} \cF_A$. Then
$\cF^*=\cF'\setminus \cF_3=\cF\setminus(\cF_1\cup \cF_2\cup \cF_3)$. By \eqref{F1-upper},
\eqref{F2-upper}, and \eqref{F3-upper}, we have
$$|\cF^*|\geq t\binom{n}{k-1}-o(n^{k-1}).$$
Furthermore, by the definition of $\cF^*$, we have
$\forall F\in \cF^*, F\cap L=\{c(F)\}$ and $\deg_{\cF^*}(F\setminus L)\leq t$.

Let $$\cF^*_0=\{F\in \cF^*: \deg_{\cF^*}(F\setminus L)\leq t-1\}.$$
Obviously
$$|\cF^*| + \frac{1}{t-1}|\cF^*_0|\leq t \binom{n-|L|}{k-1}.$$
Since $|\cF^*|\geq t\binom{n}{k-1}-o(n^{k-1})$, we  have
$$|\cF^*_0|=o(n^{k-1}) \quad \mbox{ and } |\cF^*\setminus \cF^*_0|
\geq t\binom{n}{k-1}-o(n^{k-1}).$$
By our definition, $\cF^*\setminus \cF^*_0=\bigcup_{A\subseteq L, |A|=t} \cF_A$.
Note that $|\cF_A|=t|\cF'_A|$ for each $A\subseteq L, |A|=t$.

Fix any $A\subseteq L$, $D\in \partial(\cF'_A)$, and $a\in A$. By definition, $D\cup \{a\}$
is a proper subset of some $F\in \cF_A$ where $c(F)=a$. So $D\cup \{a\}\in \Ker_s(\cF)$.
In particular, $\forall x\in \partial_1(\cF'_A)=V(\cF'_A)$ and $\forall a\in A$, we have
$\{a,x\}\in \Ker^{(2)}_s(\cF)$ and $\forall \{x,y\}\in \partial_2(\cF'_A)$ and $\forall a\in A$,
we have $\{a,x,y\}\in \Ker^{(3)}_s(\cF)$.
This means that $\Ker^{(2)}_s(\cF)$ contains all the edges between $A$ and $V(\cF'_A)$.
Let $$\cA_1=\{A\subseteq L: |A|=t, |V(\cF'_A)|\geq t+1\},  \quad  \cA_2=\{A\subseteq L:
|A|=t, |V(\cF'_A)|\leq t\}.$$
 Recall that $|L|=O(n^\varepsilon)=O(n^{\frac{1}{2(t+1)}})$. We have
$$|\bigcup_{A\in \cA_2} \cF_A|\leq \binom{|L|}{t}\binom{t}{k-1}t=O(n^\frac{1}{2}).$$
Hence,
\begin{equation} \label{A1}
|\bigcup_{A\in \cA_1} \cF_A|\geq t\binom{n}{k-1}-o(n^{k-1}).
\end{equation}

{\bf Claim 1.} $\forall A, B\in \cA_1, A\neq B$,
   we have $\partial_2(\cF'_A)\cap \partial_2(\cF'_B)=\emptyset$.

\medskip

{\it Proof of Claim 1.}
Suppose otherwise that there are $A,B\in \cA_1,  A\neq B$ and $x,y\in [n]\setminus L$,
such that $\{x,y\}\in \partial_2(\cF'_A)\cap \partial_2(\cF'_B)$.
Since $\Ker^{(2)}_s(\cF)$ contains  all the edges between $A$ and $V(\cF'_A)$ and $|V(\cF'_A)|\geq t+1$,
 we can find an $x,y$-path $P$ of length $2t$ in $\Ker^{(2)}_s(\cF)$ using the
edges between $A$ and $V(\cF'_A)$.
Let $b\in B\setminus A$. Since $\{x,y\}\in \partial_2(\cF'_B)$, we have $\{x,y,b\}\in \Ker^{(3)}_s(\cF)$.
Now, $P\cup \{x,y,b\}$ is a linear cycle of length $2t+1$ in $\Ker_s(\cF)$.
Since $s\geq k\ell$, by Proposition \ref{expansion}, $\cF$ contains a linear cycle ${\mathbb C}^{(k)}_{2t+1}$.
Note that we also have $\{x,b\}, \{y,b\}\in \Ker^{(2)}_s(\cF)$. So $P\cup \{xb,yb\}$ is
a linear cycle of length $2t+2$ in $\Ker_s(\cF)$ and by Proposition \ref{expansion}, $\cF$
contains a linear cycle of length $2t+2$. Since $\ell=2t+1$ or $2t+2$, $\cF$ contains
a copy of $\ckl$, contradicting our assumption about $\cF$. \qed

\medskip

For convenience, suppose $\cA_1=\{A_1,\ldots, A_p\}$.
For each $i\in [p]$, let $y_i\geq k-1$ denote the positive real such that
 $|\cF'_{A_i}|=\binom{y_i}{k-1}$, where without loss of generality,
 we may assume that $y_1\geq y_2\geq \ldots \geq y_p$.
By the Kruskal-Katona theorem~\eqref{eq:KK1.1},
 $\forall i\in [p], |\partial_2(\cF'_{A_i})|\geq \binom{y_i}{2}$.
By Claim 1, $\partial_2(\cF'_{A_1}),\ldots, \partial_2(\cF'_{A_p})$ are pairwise disjoint.
So we have
\begin{equation*} 
\sum_{i=1}^p\binom{y_i}{2}\leq \binom{n-|L|}{2}<\binom{n}{2}.
\end{equation*}
For each $i=1,\ldots, p$, observe that $\frac{\binom{y_i}{k-1}}{\binom{y_1}{k-1}}\leq
\frac{\binom{y_i}{2}}{\binom{y_1}{2}}$ and hence $\binom{y_i}{k-1}\leq
\binom{y_1}{k-1}\frac{\binom{y_i}{2}}{\binom{y_1}{2}}$.
This yields
\begin{equation*} 
|\bigcup_{A\in \cA_1}\cF_A|=t \sum_{A\in \cA_1}|\cF'_A|=t\sum_{i=1}^p \binom{y_i}{k-1}
\leq t\binom{y_1}{k-1}\frac{\sum_{i=1}^p \binom{y_i}{2}}{\binom{y_1}{2}}<
t\binom{y_1}{k-1}\frac{\binom{n}{2}}{\binom{y_1}{2}}.
\end{equation*}
This and  \eqref{A1}
 imply $y_1\geq n-o(n)$. Applying Kruskal-Katona theorem~\eqref{eq:KK1.1} again we get
  $$|V(\cF'_{A_1})|=|\partial_1(\cF'_{A_1})|\geq y_1\geq n-o(n).$$
Since $\Ker^{(2)}_s(\cF)$
has the all the edges between $A_1$ and $V(\cF'_{A_1})$
 the sets $S=A_1$ and $T=V(\cF'_{A_1})$ satisfy the claim of Theorem~\ref{L-set}.
\qed


\section{Proofs of the main results}\label{s:10}

In this section we prove Theorem \ref{main} and Theorem \ref{k=4}.
The lower bound is presented in Section~\ref{s:6}.
It remains to prove the upper bounds for large $n$.

Let $\cF\subseteq\binom{[n]}{k}$, where $n$ is sufficiently large.
To prove Theorem \ref{main} we assume that $k\geq 5$ and $\cF$ contains
no copy of $\ckl$. To prove Theorem \ref{k=4}, we assume that $k\geq 4$
and $\cF$ contains no member of ${\cal C}^{(k)}_\ell$.
Each upper bound in Theorem \ref{main} and \ref{k=4} is at
least $t\binom{n}{k-1}-O(n^{k-2})$. So
we may assume that $|\cF|\geq t\binom{n}{k-1}-o(n^{k-1})$.
By Theorem \ref{partition2}, we can partition $\cF$ into two subfamilies $\cF_1$ and $\cF_0$,
 where $\cF_1$ is centralized with threshold $s=k\ell$ and $|\cF_0|=O(n^{k-2})$.
In particular, $|\cF_1|\geq t\binom{n}{k-1}-o(n^{k-1})$.

By Theorem  \ref{L-set}, there exists a set $S\subseteq [n]$, where
$|S|=t$ and a set $T\subseteq [n]\setminus S$ where $|T|\geq n-o(n)$ such that
 $\Ker^{(2)}_s(\cF_1)$, as a $2$-graph,  contains all the edges between $S$ and $T$.
Let $W\subseteq [n]\setminus S$ be a set of maximum size such that
 $\Ker^{(2)}_s(\cF)$ contains all the edges between $S$ and $W$.
We have $|W|\geq n-o(n)$.
Let $Z=[n]\setminus (S\cup W)$,  $z=|Z|$.
We have $z=o(n)$.
Let $$\cF_S=\{F\in \binom{[n]}{k}: F\cap S\neq \emptyset\}.$$
Then $|\cF_S|=\binom{n}{k}-\binom{n-t}{k}$.

We split $\cF\setminus \cF_S$ into three (later into four) parts and will give an estimate
 for their sizes one by one.
We also estimate a class of missing edges, $\cD\subseteq \cF_S\setminus \cF$,
 and finally compare $|\cD|$  to $|\cF\setminus \cF_S|$.
Define $\cF\setminus \cF_S=\cG_0\cup \cG_1 \cup \cG_2$, $\cG_1=\cA\cup \cB$ and $\cD$
 as follows.
\begin{eqnarray*}
\cG_0&=&\{F\in \cF: F\subseteq Z\}, \quad \text{i.e., }F\cap S=\emptyset, \, |F\cap W|=0,\\
\cG_1&=&\{F\in \cF: F\cap S=\emptyset,\, |F\cap W|= 1\},\\
 &\cA&=\{F\in \cG_1: \deg_{\cG_1}(F\setminus W) < \ell\}, \quad
   \cB=\{F\in \cG_1: \deg_{\cG_1}(F\setminus W) \geq \ell \},\\
\cG_2&=&\{F\in \cF: F\cap S=\emptyset,\, |F\cap W|\geq 2\},\\
\cD&=&\{F\in \binom{[n]}{k}: |F\cap S|=|F\cap Z|=1, \, F\notin \cF\}.
   \end{eqnarray*}

The family $\cG_0$ does not contain a $\ckl$ on $z$ vertices so Proposition \ref{cycle-bound} yields
  \begin{equation} \label{z-bound}
|\cG_0|\leq k \ell  \binom{z}{k-1}=O(z^{k-1})=o(zn^{k-2}).
\end{equation}

Clearly
\begin{equation}\label{eq:A}
 |\cA|\leq \ell \binom{|Z|}{k-1}=O(z^{k-1})=o(zn^{k-2}).
   \end{equation}

Let $\cB'=\{F\setminus W: F\in \cB\}$,
 it is a $(k-1)$-graph on $Z$.
If $\cB'$ contains a copy $L$ of ${\mathbb C}_\ell^{(k-1)}$, then
since $\forall F\in \cB, \deg_{\cG_1}(F\setminus W)\geq \ell$,
$L$ can be extended to a copy of $\ckl$ in $\cF$, a contradiction.
So $\cB'$ contains no linear $\ell$-cycle, and Proposition \ref{cycle-bound} gives
$|\cB'|\leq k\ell \binom{z}{k-2}=O(z^{k-2})$.
Since $|\cB|\leq |\cB'|\cdot|W|$ we get
\begin{equation} \label{B-bound}
   |\cB|=O(z^{k-2}n)=o(z n^{k-2}).
\end{equation}

\medskip\noindent
{\bf Claim 2.} For $\ell=2t+1$ we have $\cG_2=\emptyset$.
For $\ell=2t+2$, if $\cF$ has no linear $\ell$-cycle then $\cG_2$ has no two members
 meeting in a singleton and if $\cF$ has no minimal $\ell$-cycle then $|\cG_2|\leq 1$.

\medskip

\noindent
{\it Proof of Claim 2.}\enskip
Suppose first that $\ell=2t+1$.
Suppose $\cG_2$ has a member $F$.
By definition, $F\cap S=\emptyset$ and $|F\cap W|\geq 2$.
Let $x,y$ be two elements of $F\cap W$.  Let $H$ denote the subgraph of $\Ker^{(2)}_s(\cF)$
 consisting of  all of its edges between $S$ and $W$.
By our choice of $W$, $H$ is a complete bipartite graph.
Since $|W|\geq n-o(n)>t+k$, for large $n$, we can find an $x,y$-path $P$ of length $2t$
 in $H$ such that $P\cap F=\{x,y\}$.
Since each edge on $P$ has kernel degree at least
 $s=k\ell$ in $\cF$, we can expand $F\cup P$ into a linear $\ell$-cycle in $\cF$, a contradiction.
So $\cG_2=\emptyset$.

Next, consider the case $\ell=2t+2$.
Suppose that $\cF$ has no linear $\ell$-cycle and $\cG_2$ contains two members $F$ and $F'$
that intersect in exactly  one element $u$.
Let $x$ be a vertex in $(F\cap W)\setminus \{u\}$ and $y$ a vertex in $(F'\cap W)\setminus\{u\}$.
Like before, since $H$ has all the edges between $S$ and $W$ and $|W|$ is large,
 we can find an $x,y$-path in $H$ of length $2t$ such that $P\cap (F\cup F')=\{x,y\}$.
We can expand $F\cup F'\cup P$ into a linear cycle of length $2t+2=\ell$ in $\cF$, a contradiction.
Suppose $\cF$ contains no minimal $\ell$-cycle instead and $\cG_2$ contains two different edges
 $F$ and $F'$.
Then we can get a contradiction by constructing a minimal $\ell$-cycle in $\cF$ using a procedure
 similar to above.
We omit the details.
\qed

\medskip

For $\ell=2t+1$, by Claim 2 we have $|\cG_2|=0$.
For $\ell=2t+2$, if $\cF$ has no linear $\ell$-cycle, then Claim 2 and
Frankl's theorem~\eqref{eq:EKR} yield
$|\cG_2|\leq \binom{n-t-2}{k-2}$ (for large enough $n$)
and if $\cF$ has no minimal $\ell$-cycle  then $|\cG_2|\leq 1$.

Finally, consider $\cD$.
Let $u\in Z$.
The maximality of $W$ implies that there exists an $x\in S$
 such that $xu\notin \Ker^{(2)}_s(\cF)$ and hence $\deg^*_{\cF}(\{x,u\})<s$.
 Then~\eqref{eq:2.3} implies that the
  $\deg_\cF( \{ x,u\})\leq s\binom{|W|-1}{k-3}$.
Hence $\deg_\cD(\{x,u\})\geq \binom{|W|}{k-2}-s\binom{|W|-1}{k-3}\geq \Omega(n^{k-2})$.
Since this holds for every $u\in Z$  we get
\begin{equation} \label{D-bound}
|\cD|\geq |Z|\times \left( \binom{|W|}{k-2}-s\binom{|W|-1}{k-3}\right)\geq \Omega(z n^{k-2}).
\end{equation}

Now, we are ready to prove the desired bound on $|\cF|$.
By our definition, $\cF\subseteq (\cF_S\setminus \cD)\cup G_0\cup \cG_1\cup \cG_2$.
For $\ell=2t+1$ we have $|\cG_2|=0$. So, for sufficiently large $n$, \eqref{z-bound}--\eqref{D-bound} yield
\begin{equation} \label{odd-upper}
|\cF|\leq |\cF_S|-\Omega(zn^{k-2})+o(zn^{k-2})\leq |\cF_S|-
\Omega(zn^{k-2}) \leq |\cF_S|=\binom{n}{k}-\binom{n-t}{k}.
\end{equation}

For $\ell=2t+2$, if we assume  that $k\geq 5$ and $\cF$ has no linear $\ell$-cycle, then
$|\cG_2|\leq \binom{n-t-2}{k-2}$ and by \eqref{z-bound}--\eqref{D-bound} we have
\begin{equation} \label{even-upper}
|\cF|\leq |\cF_S|-\Omega(zn^{k-2})+o(zn^{k-2})
  +\binom{n-t-2}{k-2}\leq \binom{n}{k}-\binom{n-t}{k}+\binom{n-t-2}{k-2},
  \end{equation}
for large $n$.
If we assume that $k\geq 4$ and $\cF$ has no minimal $\ell$-cycle, then $|\cG_2|\leq 1$
 and we have $|\cF|\leq |\cF_S|+1=\binom{n}{k}-\binom{n-t}{k}+1$.
\qed

\section{Stability and concluding remarks}

By \eqref{odd-upper} and \eqref{even-upper}, we also have the following
stability statement.

\begin{proposition}
Let $k,\ell$ be positive integers, where $\ell\geq 3$ and $k\geq 4$.
Let $\varepsilon$ be any small positive real.
There exists a positive real $\delta$ such that for all  $n\geq n_2(k,\ell)$ the follows holds.
Let $\cF\subseteq \binom{[n]}{k}$ be a family that contains no copy of $\ckl$ if $k\geq 5$
 and no member of $\cckl$ if $k=4$ and $|\cF|\geq (1-\delta) t \binom{n}{k-1}$.
Then there exists a set $S\subseteq [n]$, where $|S|=t$, such that all except at most
 $\varepsilon \binom{n}{k}$ of the members of $\cF$ intersect $S$.
\end{proposition}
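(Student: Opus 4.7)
The plan is to follow the upper bound proofs of Theorems \ref{main} and \ref{k=4} but track a quantitative parameter $\delta$ in place of each $o(n^{k-1})$ slack. Given $\varepsilon>0$, I will fix $\delta>0$ at the end, once the implicit constants in the bounds below are in hand.

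First I would set $s = k\ell$ and split $\cF = \cF_1 \cup \cF_0$ via Theorem \ref{partition2}, with $\cF_1$ centralized of threshold $s$ and $|\cF_0| = O(n^{k-2})$; this yields $|\cF_1| \geq (1-2\delta)t\binom{n}{k-1}$ for $n$ large. Next I would repeat the proof of Theorem \ref{L-set} with $\cF_1$ in place of $\cF$, substituting an $O(\delta n^{k-1})$ term for every $o(n^{k-1})$ slack. The $\cthreel$-freeness bound and the Kruskal-Katona steps go through unchanged; the only quantitative change is in the final display, where the inequality $\binom{y_1}{k-1}/\binom{y_1}{2} \geq (1-O(\delta))\binom{n}{k-1}/\binom{n}{2}$, after taking a $(k-3)$-rd root of the leading ratio $(y_1/n)^{k-3}$, gives $y_1 \geq (1-\eta)n$ with $\eta = O(\delta)$. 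This produces a $t$-set $S$ and a set $T\subseteq [n]\setminus S$ of size at least $(1-\eta)n$ such that $\Ker_s^{(2)}(\cF_1)$ contains all edges between $S$ and $T$.

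Now I would execute the endgame of Section \ref{s:10}: extend $T$ to a maximal $W$, put $Z = [n]\setminus(S\cup W)$ so that $z := |Z| \leq \eta n + t = O(\delta n)$, and decompose $\cF\setminus \cF_S$ as $\cG_0 \cup \cA \cup \cB \cup \cG_2$. The bounds \eqref{z-bound}, \eqref{eq:A}, \eqref{B-bound} give $|\cG_0|+|\cA|+|\cB| = O(z^{k-2}n) = O(\delta^{k-2}n^{k-1})$, while Claim 2 combined with \eqref{eq:EKR} (or $|\cG_2|\leq 1$ in the minimal-cycle setting) gives $|\cG_2| = O(n^{k-2})$. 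Therefore
$$|\cF \setminus \cF_S| = O(\delta^{k-2}n^{k-1}) + O(n^{k-2}).$$
Choosing $\delta = \delta(\varepsilon, k, \ell)$ so that the implicit constant times $\delta^{k-2}$ is below $\varepsilon/(2\cdot k!)$, and enlarging $n_2(k,\ell)$ so that the $O(n^{k-2})$ term falls below $\tfrac12\varepsilon\binom{n}{k}$, yields $|\cF\setminus \cF_S| \leq \varepsilon\binom{n}{k}$, as desired.

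The main obstacle is the quantitative extension of Theorem \ref{L-set}. Its original proof uses $o(\cdot)$-notation freely, and one must verify that each such slack can be replaced by an explicit $O(\delta n^{k-1})$ term and that the resulting Kruskal-Katona-type inequality can be solved for $y_1$ with error of the right order in $\delta$. Once this bookkeeping is done, plugging $z = O(\delta n)$ into the endgame bounds of Section \ref{s:10} is mechanical.
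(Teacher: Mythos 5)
Your proposal is correct and takes essentially the same approach as the paper, which simply remarks that the stability statement follows from inequalities \eqref{odd-upper} and \eqref{even-upper}. You have fleshed out the implicit bookkeeping: rerun Theorem~\ref{L-set} with the weaker hypothesis $|\cF|\geq(1-\delta)t\binom{n}{k-1}$ to get $y_1\geq(1-O(\delta))n$ and hence $z=O(\delta n)$, then plug into the bounds \eqref{z-bound}--\eqref{B-bound} on $\cG_0,\cA,\cB$ and the bound on $\cG_2$ to get $|\cF\setminus\cF_S|=O(\delta^{k-2}n^{k-1})+O(n^{k-2})\leq\varepsilon\binom{n}{k}$ for $\delta$ small and $n$ large, which is exactly what the paper's two displays encode.
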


In Section~\ref{s:3} we observed that
the $3$-uniform linear cycle $\cthreel$ is a subgraph of the triangulated cycle
$\TT^{(3)}_\ell$ and $\ckl$ is a $k$-expansion of $\cthreel$.
The triangulated cycle $\TT^{(k)}_\ell$ is an example of a so-called  $q$-forest
 where $q=3$.
A {\it $q$-forest} is a $q$-graph whose edges can be linearly ordered as
 $E_1,\ldots, E_m$ such that for all $i\geq 2$ there exists
some $a(i)<i$ such that $E_i\cap ((\bigcup_{j<i} E_j)\subseteq E_{a(i)}$. A subgraph of a $q$-forest
is called a {\it partial $q$-forest}. So $\cthreel$ is a partial $3$-forest.
In a forthcoming paper, for all $k,q$ satisfying $q\geq 3$ and $k\geq 2q-1$,
 we will asymptotically determine the Tur\'an numbers for the rather wide family of hypergraphs
 that are $k$-expansions of partial $q$-forests.




\begin{thebibliography}{99}



\small


\bibitem{erdos-matching} P. Erd\H{o}s: A problem on independent $r$-tuples,
\emph{Ann. Univ. Sci. Budapest} \textbf{8} (1965), 93--95.


\bibitem{EG65} P. Erd\H{o}s, T. Gallai: On maximal paths
and circuits of graphs, {\em Acta Math. Acad. Sci. Hungar.}
\textbf{10} (1959), 337--356.


\bibitem{EKR} P.  Erd\H{o}s, C. Ko, R. Rado: Intersection theorems for systems of finite sets,
 \emph{Quart. J. Math. Oxford Ser. (2)}  \textbf{12} (1961), 313--320.


\bibitem{frankl1}
P. Frankl: On families of finite sets no two of which intersect in a singleton,
\emph{Bull. Austral. Math. Soc.}  {\bf 17} (1977),  125--134.

\bibitem{frankl2} 
P. Frankl: The shifting technique in
extremal set theory. \emph{Surveys in combinatorics} 1987 (New Cross, 1987), 81-110, London Math. Soc.
Lecture Note Ser. \textbf{123}, Cambridge Univ. Press, Cambridge, 1987.

\bibitem{F12}
{P. Frankl}:
On the maximum number of edges in a hypergraph with given matching number,\newline
arXiv:1205.6847 (May, 30, 2012), 26 pp.

\bibitem{frankl-matching}
P. Frankl: Improved bounds for Erd\H os' matching conjecture,
\emph{J. Combin. Th.  Ser. A}, to appear.


\bibitem{frankl-furedi-exact} P. Frankl,  Z. F\"uredi: Exact solution of some Tur\'an-type problems,
\emph{J. Combin. Th. Ser. A} \textbf{45} (1987), 226--262.


\bibitem{FLM}
{P. Frankl, T. {\L}uczak}, { K. Mieczkowska}:
 On matchings in hypergraphs,
 \emph{Electronic J. Combin.} \textbf{19} (2012), Paper 42, 5 pp.

\bibitem{FRR}
{P. Frankl, V. R\"odl},  {A. Ruci\'nski}:
On the maximum number of edges in a triple system not containing a disjoint
 family of a given size,
\emph{Combinatorics, Probability and Computing} \textbf{21} (2012), 141--148.


\bibitem{furedi-1983} Z. F\"uredi:  On finite set-systems whose every intersection is a kernel of a star,
{\em Discrete Math.} \textbf{47} (1983), 129--132.


\bibitem{furedi-2012} Z. F\"uredi:
Linear trees in uniform hypergraphs, submitted.


\bibitem{FJS} Z. F\"uredi, T. Jiang, R. Seiver:
Exact Solution of the hypergraph Tur\'an problem for $k$-uniform linear paths,
\emph{Combinatorica}, to appear.


\bibitem{lale-furedi} Z. F\"uredi, L. \"Ozkahya: Unavoidable subhypergraphs: $a$-clusters,
\emph{J. Combin. Th. Ser. A} \textbf{118}  (2011), 2246--2256 .

\bibitem{gyori} E. Gy\H ori, G.Y. Katona, N. Lemons: Hypergraph extensions of the Erd\H{o}s-Gallai theorem,
\emph{Electronic Notes in Disc. Math.}  \textbf{36} (2010), 655--662.


\bibitem{HLS}
{H. Huang, P. Loh}, {B. Sudakov}:
The size of a hypergraph and its matching number,
\emph{Combinatorics, Probability and Computing} \textbf{21} (2012), 442--450.


\bibitem{jiang} T. Jiang, O. Pikhurko, Z. Yilma: Set-systems without a strong simplex,
\emph{SIAM J. Discrete Math.} \textbf{24} (2010), 1038--1045.


\bibitem{JS} T. Jiang, R. Seiver: Hypergraph Tur\'an numbers of loose paths, manuscript.

\bibitem{KMV} Kostochka, Mubayi, and Verstra\"ete:  personal communications.

\bibitem{L79}
{L. Lov\'asz}: \emph{Combinatorial Problems and Exercises},
Akad\'emiai Kiad\'o, Budapest and North Holland, Amsterdam, 1979.


\bibitem{LM}
  T. 
  {\L}uczak, K.
  Mieczkowska:
    On Erd\H{o}s' extremal problem on matchings in hypergraphs,
arXiv:1202.4196 (February 19, 2012), 16 pp.


\bibitem{MV2} D. Mubayi, J. Verstra\"ete: Proof of a conjecture of Erd\H{o}s on triangles in
set systems,
\emph{Combinatorica} \textbf{25} (2005), 599-614.


\bibitem{MV1} D. Mubayi, J. Verstra\"ete: Minimal paths and cycles in set systems,
\emph{European J. Combin.} \textbf{28} (2007), 1681--1693.


\end{thebibliography}
\end{document}